\documentclass[a4paper,english, 12pt]{article}
\usepackage{babel} 
\usepackage[ansinew]{inputenc}
\usepackage{amsmath, amssymb, amsthm}
\usepackage{nicefrac}
\usepackage[arrow, matrix, curve]{xy}
\usepackage[nospace,noadjust]{cite}
\usepackage{graphics}
\usepackage{graphicx}
\usepackage[lmargin=3.00cm,rmargin=3.00cm,tmargin=3cm,bmargin=3cm]{geometry}


\newcommand{\R}{\mathcal{R}}
\newcommand{\Reals}{\mathbb{R}}

\newcommand{\Z}{\mathbb{Z}}
\newcommand{\Q}{\mathbb{Q}}

\newcommand{\C}{\mathbb{C}}

\newcommand{\F}{\mathcal{F}}

\newcommand{\T}{\pmb{T}}

\renewcommand{\P}{\mathcal{P}}

\newcommand{\id}{\textnormal{id}}

\newcommand{\Hom}{\textnormal{Hom}}

\newcommand{\colim}{\textnormal{colim}}

\newcommand{\into}{\hookrightarrow}
\newcommand{\tr}{\textnormal{tr}}

\newcommand{\Ind}{\textnormal{Ind}}

\renewcommand{\S}{\mathcal{S}}

\newcommand{\hocolim}{\textnormal{hocolim}}

\renewcommand{\T}{\mathcal{T}}
\newcommand{\sing}{\textnormal{sing}}
\newcommand{\hofib}{\textnormal{hofib}}
\newcommand{\cof}{\rightarrowtail}

\renewcommand{\Re}{\textnormal{Re}}

\renewcommand{\tilde}{\widetilde}
\renewcommand{\bar}{\overline}

\newtheorem{Lemma}{Lemma}[section]
\newtheorem{Theorem}[Lemma]{Theorem}
\newtheorem*{Theorem*}{Theorem}
\newtheorem{Proposition}[Lemma]{Proposition}

\theoremstyle{definition}

\newtheorem{Definition}[Lemma]{Definition}

\theoremstyle{remark}
\newtheorem{Remark}[Lemma]{Remark}

\setcounter{tocdepth}{2}

\title{Twisted Higher Smooth Torsion}
\author{Chris Ohrt}

%
%

\begin{document}

\maketitle

\begin{abstract} In this paper we extend Badzioch's, Dorabiala's, and Williams' definition \cite{zbMATH05551153} of cohomological higher smooth torsion to a twisted cohomological higher torsion invariant. Additionally, we show that this still satisfies geometric additivity and transfer, and will also satisfy additivity and transfer for coefficients.
\end{abstract}  

\tableofcontents

\section{Introduction}

Smooth parametrized torsion was introduced by Dwyer, Weiss, and Williams in \cite{zbMATH02115779}. Its cohomological version was defined by Dorabiala, Badzioch, and Williams in \cite{zbMATH05551153}. Using homotopical techniques they construct  characteristic classes $\tau_{2k}(E)\in H^{4k}(B)$ for any smooth bundle $E\to B$ that distinguishes between most smooth structures on $E.$ There are many constructions (as introduced by Wagoner, Klein, Igusa, Bismut, Lott, Goette and many others (\cite{Igusa2}, \cite{0793.19002}, \cite{0837.58028},  \cite{1071.58025})) like this and they are commonly referred to as (untwisted) higher torsion invariants. The advantage of this particular construction compared to other higher torsion invariants is that it is very natural, but unfortunately it does not lend itself to any calculations. 

On the other hand, Igusa and Klein used parametrized Morse theory to define Igusa-Klein torsion and were able to do basic calculations with it \cite{Igusa2}. Furthermore, Igusa defined a system of three axioms and showed that the space of untwisted higher torsion invariants (satisfying these axioms) is 2-dimensional and spanned by Igusa-Klein torsion and essentially a Chern class \cite{Igusa1}. This not only helps in the comparison of different torsion invariants but also enables more advanced calculations since the axioms give techniques for vertically deconstructing a smooth bundle into easier pieces. Badzioch, Dorabiala, Klein, and Williams showed that smooth parametrized torsion satisfies Igusa's axioms and is in fact a non-zero multiple of Igusa-Klein torsion \cite{zbMATH05848700}.

Besides untwisted Igusa-Klein torsion, there also exists a twisted version which takes a smooth bundle $E\to B$ together with a local system $\F\to E$ and constructs a characteristic class $\tau_k(E;\F)\in H^{2k}(B;\Reals)$ (twisted torsion classes exist in every other degree, while untwisted ones only exist in every 4th degree). The author expanded Igusa's axioms to an axiom system for twisted higher torsion invariants and showed that the space of twisted higher torsion invariants is again spanned by the (twisted) Igusa-Klein torsion and a Chern class \cite{Axioms}. This paper defines a twisted version of the cohomological smooth higher torsion and verifies that it satisfies most of the axioms for higher twisted torsion. This reduces the problem of relating the twisted smooth parametrized torsion to the twisted Igusa-Klein torsion to the task of calculating the smooth parametrized torsion of a universal $S^1$-bundle.

\subsection*{Overview}

The first section recaps constructions which have essentially been done in \cite{zbMATH05551153} already: using partitions we give an explicit model for $Q(X_+)\simeq \Omega^\infty\Sigma^\infty X_+$ if $X$ is a compact manifold. For a smooth bundle $E\to B$ we then define maps
	$$\xymatrix{
		&	Q(E_+)\ar[d]^{\lambda_\F}\\
		B\ar[ur]^{p^!}\ar[r]_{c_\F}	&	K(\C)}
	$$
where the above diagram commutes up to preferred homotopy. We show that $c_\F$ can be contracted to a constant map if $\F$ is unipotent (Definition \ref{unipotent}). Consequently we get a lift 
	$$\tau_\F:B\to Wh_\F(E):=\hofib(Q(E_+)\to K(\C))$$
which we call the torsion map. This was essentially defined by Dwyer, Weiss, and Williams already \cite{zbMATH02115779}. 

In the untwisted case, one uses the map $Q(E_+)\to Q(S^0)$ to pull back a universal class $b_{2k}\in H^{4k}(Wh(*);\Reals)$ (this class is derived from the Borel regulator \cite{zbMATH03495395} $b_{2k}\in H^{4k-1}(K(\Q);\Reals)$) all the way to $H^{4k}(B;\Reals).$ For the twisted torsion, we cannot directly mirror this process, as there cannot be a nontrivial local system on the point $*.$ So we need to replace the point with the ``$G$-equivariant point'' $BG$ (where $\rho:G\to U(m)$ is the representation corresponding to the local system $\F.$) Since the standard model for the classifying space $BG$ is not a compact manifold, we need to use a compact manifold approximation to $BG.$ All this is done in the second section.

Finally we are left to verify which of the axioms introduced by Igusa and the author are satisfied by the twisted smooth higher torsion. The seven axioms are naturality, triviality, geometric additivity, geometric transfer, additivity for coefficients, transfer for coefficients, and continuity (they are stated explicitly later). It is immediate from the definition that twisted smooth higher torsion behaves naturally under pull backs along base maps $B\to B'.$ It is also easy to see that $\tau_{2k+1}(E;\F)=0$ if $\F$ is trivial; this follows because if $\F$ is trivial, the whole construction factorizes through $K(\Q)$ and the Borel regulators vanish on $H^{4k+2}(K(\Q);\Reals).$ In this paper we will show:

\begin{Theorem} Twisted higher smooth torsion satisfies geometric additivity and transfer as well as additvity and transfer for coefficients.
\end{Theorem}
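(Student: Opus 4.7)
The plan is to isolate, for each of the four axioms, an additivity or transfer property of the two ingredients that make up the torsion map: the parametrized transfer $p^!:B\to Q(E_+)$ and the chosen trivialization of the composite $\lambda_\F\circ p^!\simeq c_\F$. Since $\tau_\F$ is a lift determined by the pair (map, nullhomotopy) into the homotopy fiber $Wh_\F(E)$, verifying an axiom amounts to matching both pieces of data under the relevant decomposition and then invoking functoriality of $\hofib$.

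First I would handle geometric additivity. For a fiberwise codimension-zero decomposition $E=E_1\cup_{E_0}E_2$, the partition model for $Q(E_+)$ constructed in the first section is set up precisely so that a Mayer--Vietoris-type identity
\[p^!_E\;\simeq\; p^!_{E_1}+p^!_{E_2}-p^!_{E_0}\]
holds in $Q(E_+)$ via a preferred homotopy (after pushing each summand forward along the canonical inclusions). Post-composing with $\lambda_\F$ gives the same identity on characteristic classes in $K(\C)$ for the restricted local systems $\F|_{E_i}$, and the trivializations of $c_{\F|_{E_i}}$ obtained from the unipotent filtration of $\F$ on each piece assemble to a trivialization of $c_\F$. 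Passing to homotopy fibers then yields the desired additivity for $\tau_\F$. Geometric transfer is verified in the same spirit: for a further smooth bundle $F\to E$ over $E\to B$, the parametrized Thom collapse construction gives a preferred homotopy $p^!_{F\to B}\simeq p^!_{E\to B}\circ p^!_{F\to E}$, which combined with the evident naturality of $\lambda_\F$ under pullback of the local system along $F\to E$ yields the transfer identity.

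For additivity of coefficients one uses that $K(\C)$ is an infinite loop space in which direct sum of local systems corresponds to $H$-space addition. Concretely, $\lambda_{\F_1\oplus\F_2}$ equals the composite of $(\lambda_{\F_1},\lambda_{\F_2}):Q(E_+)\to K(\C)\times K(\C)$ with the addition map, so the characteristic classes satisfy $c_{\F_1\oplus\F_2}\simeq c_{\F_1}+c_{\F_2}$ via a canonical homotopy. The chosen trivializations for each summand (which exist because unipotence is preserved under direct sums) add to a trivialization of $c_{\F_1\oplus\F_2}$, and the induced map on homotopy fibers produces the equality $\tau_{\F_1\oplus\F_2}=\tau_{\F_1}+\tau_{\F_2}$ in $Wh_{\F_1\oplus\F_2}(E)$. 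Transfer for coefficients is the corresponding statement for the fiberwise pushforward of local systems along a smooth subbundle, and reduces by the same bookkeeping to the compatibility of $\lambda_\F$ with the induced map on $K(\C)$.

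The main obstacle I anticipate is that each of these statements must be witnessed not merely up to homotopy but by specific compatible homotopies, since the torsion lift depends on the chosen path trivializing $c_\F$, not just on the existence of such a trivialization. Handling this correctly requires that the partition-based model for $Q(E_+)$ from the first section be rigid enough for unions of submanifolds, fiber products, and direct sums of local systems to give the above identities on the nose (or via a canonical $2$-cell), and that the explicit trivialization of $c_\F$ provided by the unipotent filtration of $\F$ be compatible with restriction, direct sum, and fiberwise pushforward. Once these naturality statements are established, the four axioms follow formally from applying $\hofib$ to the commuting diagrams.
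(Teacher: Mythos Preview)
Your framework---establishing each axiom by matching the transfer $p^!$ and the contracting homotopy of $c_\F$, then passing to homotopy fibers---is the right shape for producing identities among the torsion \emph{maps} $\tau_\F:|\sing B|\to Wh_\F(E)$. But the axioms are statements about the cohomology classes $\tau_i(E;\F)\in H^{2i}(B;\Reals)$, which are defined by further pulling $b_i$ back along $Wh_\F(E)\to Wh_\F(BG(N))$ and the rational equivalence $Wh_\F(BG)_\Q\to\Omega K(\C)_\Q$. For geometric additivity this extra layer is harmless, since all the $E_i$ carry restrictions of the same $\F$ and hence map to the same $BG(N)$. For the two coefficient axioms, however, the classifying spaces change, and the paper's argument hinges on tracking the Borel class through them: additivity of coefficients requires $\oplus^*b_k=b_k\otimes1+1\otimes b_k$ on $\Omega K(\C)\times\Omega K(\C)$ (proved via Eckmann--Hilton), and transfer of coefficients requires $Wh(\pi^*_{uni})^*b_k^H=b_k^G$ for the universal covering $BH(N)\to BG(N)$. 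Your outline stops at $Wh_\F(E)$ and never touches this layer, so as written it does not close the argument for either coefficient axiom.

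Your account of geometric transfer has a more specific gap. The axiom is not a bare composition statement but the two-term formula
\[\tau(pq;q^*\F)=\chi(F)\,\tau(p;\F)+\tr_B^E\bigl(\tau(q;q^*\F)\bigr),\]
and simply observing $(pq)^!\simeq\tilde Q(q^!)\circ p^!$ together with naturality of $\lambda$ does not produce the Euler-characteristic weight or the second summand. The paper's mechanism is that for Leray--Hirsch bundles the square relating $\lambda_\F$ and $\lambda_{q^*\F}$ commutes via $\otimes H_*(F):K(\C)\to K(\C)$; on the fiber $\Omega K(\C)$ of the Whitehead space this becomes multiplication by $\chi(F)$, which is what feeds into the cohomological formula. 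Odd-dimensional linear sphere bundles, which are not Leray--Hirsch, are then handled by splitting into hemispheres and invoking the already-established additivity. None of this structure appears in your sketch.
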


While geometric addivity and geometric transfer have already been done in the non-twisted case in \cite{zbMATH05848700} (the proof is recalled and adapted), the proofs for additivity and transfer for coefficients are original. 

In the end we are left to prove the continuity axiom. Let $S^\infty\to \mathbb{C}\mathbb{P}^\infty$ be the universal $S^1$-bundle. Since the cyclic group of order $n$ acts on $S^\infty$ we can define $\F_\xi\to S^\infty/n$ to be the local system corresponding to the $n$th-root of unity $\xi.$ The continuity axiom essentially states that the dependence of the class $\tau(S^\infty/n;\F_\xi)$  is continuous on $\xi\in \Q/\Z.$ Igusa proved that Igusa-Klein torsion satisfies this axiom by explicitly calculating the torsion of these universal bundles \cite{Igusa2}. Providing such a calculation for smooth torsion is part of the ongoing work of the author.

\section{The higher smooth torsion map} 

\subsection{The manifold approach}

This section repeats the constructions made in the beginning of \cite{zbMATH05551153}. Let $X$ be a compact manifold. We will define a model for $\Omega^\infty\Sigma^\infty X_+$ which (by abusing notation a bit) we will call $Q(X_+).$ It will be constructed as the direct limit under stabilization of the Waldhausen K-theory spaces of certain categories of partitions. We will refrain from giving details as they can be found in \cite{zbMATH05551153}.

A partition of $X\times I$ is a (not necessarily smooth and possibly with corners) codimension 0 submanifold $M\subset X\times I$ that represents the lower half of a division of the interval in two parts and is somewhat standard around the boundary and on the lower third. Part of the data is also a vector field transversal to the boundary which can be used to smoothen the partition. The set  $\P_k(X)$ consists of partitions of $X\times I$ parametrized over $\Delta^k.$ These fit together in a simplicial set $\P_\bullet(X).$ There is a stabilization map $\P_\bullet (X)\to \P_\bullet (X\times I)$ defined by putting the non-trivial part of a partition of $X\times I$ into the middle third (of the second interval) to get a partition of $(X\times I)\times I.$ We note that there is a partial monoid structure on $\P_\bullet (X)$ where we add two partitions of $X\times I$ if they do not share any non-trivial parts. Stabilization now provides a monoid structure on $\colim_n \P_\bullet(X\times I^n).$ 

The sets $\P_k(X)$ can also be viewed as partially ordered sets  by inclusion, and hence as categories. So we can apply the Waldhausen $S$-construction (or rather the Thomason variant thereof) \cite{zbMATH03927168} to get bisimplicial categories $\T_\bullet \P_\bullet (X).$ Recall that the objects of the cageory $\T_n\P_0$ are $(n+1)$-tuples of partitions $(M_i)_{i=0}^n$ with
	$$M_0\subset M_1\subset\ldots\subset M_n.$$
The space $\T_0\P_\bullet (X)$ turns out to be contractible and hence we get a weak equivalence
	$$|\T_\bullet\P_\bullet (X)|\simeq |\T_\bullet\P_\bullet (X)|/|\T_0\P_\bullet(X)|$$
and by abuse of notation we will denote the right side by $|\T_\bullet \P_\bullet(X)|.$ Note that this is now endowed with a canonical base point.

By stabilization we get a space 
	$$Q(X_+):=\Omega\hocolim_n|\T_\bullet \P_\bullet(X\times I^n)|\simeq \Omega^\infty\Sigma^\infty X_+.$$
The weak equivalence on the right is rather intricate and was shown by Waldhausen in \cite{zbMATH03958257} and \cite{zbMATH04163135}.

The next goal is to get a map from $Q(X_+)$ to a model of the Waldhausen K-Theory $A(X).$ Details on the following construction can still be found in \cite{zbMATH05551153}. First we recall the standard model of $A(X)$:  Let $\R^{fd}(X)$ be the category of finitely dominated retractive spaces over $X.$ Then we set -- again using Thomason's model for the Waldhausen $S$-construction
	$$A(X):=\Omega(|\T_\bullet \R^{fd}(X)|/|\T_0 R^{fd}(X)|).$$

In order to get the map $Q(X_+)\to A(X)$ we need to ``thicken up'' $A$ a bit: we let $R^{fd}_\bullet(X)$ be the simplicial category with $k$th level being the retractive spaces $Y$ over $X$ parametrized over $\Delta^k.$ Again we have a stabilization map $\R^{fd}_\bullet(X)\to \R^{fd}_\bullet(X\times I)$ where ``the interesting part happens in the middle third.'' We then form 
	$$A_p(X):=\Omega\hocolim_n|\T_\bullet \R^{fd}_\bullet(X\times I^n)|$$
and since every partition $M$ can be viewed as a retractive space, there is a map
	$$\alpha:Q(X_+)\to A_p(X).$$

The difference between the models $A_p(X)$ and $A(X)$  is that the former is endowed with an additional simplicial enrichment and a stabilization process. Clearly $A(X)$ includes into $A_p(X)$ as the 0-simplices of the bottom of the colimit, giving a map 
	$$i:A(X)\to A_p(X)$$
which Waldhausen shows explicitly to be a weak homotopy equivalence \cite{zbMATH03958257}.

It will be convenient for us to have a map to the standard model of Waldhausen A-theory and hence we define a space $\tilde Q(X_+)$ as the homotopy pull back	
	$$\xymatrix{
		\tilde Q(X_+)\ar[r]\ar[d]_{\tilde \alpha}	&	Q(X_+)\ar[d]^\alpha\\
		A(X)\ar[r]^i	&	A_p(X).}
	$$
Obviously, we have a weak equivalence $\tilde Q(X_+)\simeq Q(X_+).$

\begin{Remark} \label{mapsinto} The following helps greatly in defining maps into $Q(X_+)$ (and $A_p(X)$ and $\tilde Q(X_+)$). Recall that there is a natural map
	$$|\T_1\P_\bullet(X)|\times \Delta^1\to |\T_\bullet\P_\bullet(X)|$$
given by the inclusion of the 1-skeleton in the $\T_\bullet$ direction \cite{zbMATH03927168}. After stabilizing and taking the adjoint this gives a map 
	$$|\T_1\P_\bullet(X)|\to \Omega Q(X_+).$$
Hence it is always enough to define a functor $\mathcal{C}\to\T_1\P_\bullet(X)$ to get a map $|\mathcal{C}|\to Q(X_+)$ for any small category $\mathcal{C}.$
\end{Remark}

\subsection{The transfer map}

Let $p:E\to B$ be a bundle of compact manifolds. The goal of this section is to give a model for the Gottlieb-Becker transfer $p^!:B\to Q(E_+).$ We again follow \cite{zbMATH05551153}
 very closely and will refer to this for any details.

First let $\sing B$ denote the simplicial set of smooth simplices in $B$ viewed as a simplicial category with no non-trivial morphisms. We note that $|\sing B|\simeq B.$ We will explicitly construct a map $p^!:|\sing B|\to \tilde Q(E_+).$ We will do this in two steps:

First we define a map $Q(p^!):Q(B_+)\to Q(E_+).$ This is essentially done by pulling back partitions $M\mapsto (p\times id)^{-1}(M)$ and the attached smoothing data. Notice that pulling back the smoothing data depends on the smooth structure of the bundle. In fact this is the only piece of the construction that will depend on the smooth data. It easily gives rise to a map
	$$\tilde Q(p^!):\tilde Q(B_+)\to \tilde Q(E_+).$$

Then we can explicitly define a map $|\sing B|\to \tilde Q(B_+).$ Details can be found in \cite{zbMATH05551153}. The core of this is the following: Let $\sigma:\Delta^n\to B$ be a simplex. The exponential map gives an inclusion $\sigma^*TB_\epsilon\times I\to B\times \Delta^n\times I,$ where $TB_\epsilon$ denotes a small tangential tube. After a slight modification this indeed represents a parametrized partition of $B.$

Now composition of those two maps gives a map
	$$p^!:|\sing B|\to \tilde Q(E_+)$$
which agrees with the Becker-Gottlieb transfer \cite{zbMATH05551153}.

\begin{Remark} It is worth noting that $\tilde \alpha p^!:|\sing B|\to A(E_+)$ comes from the functor sending $\sigma:\Delta^n\to B$ to the retractive $E_+\to E_+\sqcup \sigma^*E\to E_+.$ Compare to \cite{zbMATH05551153}.
\end{Remark}

\subsection{Linearization}

We still follow \cite{zbMATH05551153} closely to define linearization maps. Let $R$ be a ring and let $\P Ch^{fd}(R)$ be the Waldhausen category of homotopy finitely dominated chain complexes of projective $R$-modules. Recall that the Waldhausen K-theory of this category is just a model for the algebraic K-theory $K(R)$ of $R$ \cite{zbMATH03927168}.

Now let $X$ be a compact manifold and $\F$ a local system of $R$-modules on $X.$ Then we get a functor
	$$\R^{fd}(X)\to \P Ch^{fd}(R)$$
by sending a retractive space $X\to Y\to X$ to the relative singular chain complex $C_*(Y,X;\F).$ This induces a linearization map
	$$\lambda^\R_\F:A(X)\to K(R)$$
and if we compose with the assembly $\alpha:\tilde Q(X_+)\to A(X)$ we get a map 
	$$\lambda_\F:\tilde Q(X_+)\to K(R).$$
	
Let $E\to B$ be a bundle of compact manifolds and let $\F$ be a local system of $R$-modules on $E.$ Similarly to before we can define a functor
	$$\sing B\to w\P Ch^{fd}(R)$$ 
(the $w$ indicates that we are only looking at quasi-isomorphisms as morphisms). In particular, this functor sends a simplex $\sigma:\Delta^k\to B$ to the chain complex $C_*(\sigma^*E, \F).$ Using Remark \ref{mapsinto} this gives rise to a map
	$$c_\F: |\sing B|\to K(R).$$
	
By analyzing the concrete map descriptions it can be shown \cite{zbMATH05551153}

\begin{Theorem} Let $E\to B$ be a bundle of compact manifolds, $R$ a ring, and $\F$ a local system of $R$-modules on $E.$ Then there is a preferred homotopy which makes the following diagram commute:
	$$\xymatrix{
		&	\tilde Q(E_+)\ar[d]^{\lambda_\F}\\
		|\sing B|\ar[ur]^{p^!}\ar[r]_{c_\F}	&	K(R)}$$
	The homotopy is induced by the isomorphism $H_*(\sigma^*E;\F)\cong H_*(E\sqcup \sigma^* E, E;\F).$
\end{Theorem}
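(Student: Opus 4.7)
The plan is to verify homotopy commutativity by unwinding both compositions as functors from $\sing B$ into a suitable Waldhausen category of chain complexes, exhibiting a natural weak equivalence between them, and then invoking the standard principle that such a natural weak equivalence produces a preferred homotopy between the induced maps on K-theory (via the mechanism recorded in Remark \ref{mapsinto}).

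First I would trace $\lambda_\F \circ p^!$ on simplices. By the remark following the construction of $p^!$, the composite $\tilde\alpha\circ p^!$ corresponds to the functor $\sing B\to w\R^{fd}(E)$ sending $\sigma:\Delta^n\to B$ to the retractive space $E_+\to E_+\sqcup \sigma^*E\to E_+$. Postcomposing with $\lambda_\F^{\R}$, which sends a retractive space $E\to Y\to E$ to $C_*(Y,E;\F)$, yields the functor
$$\sigma\;\longmapsto\; C_*(E_+\sqcup \sigma^*E,\, E_+;\F).$$
On the other hand, the definition of $c_\F$ identifies it with the functor
$$\sigma\;\longmapsto\; C_*(\sigma^*E;\F).$$

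Second, I would exhibit a natural weak equivalence between these two functors $\sing B\to w\P Ch^{fd}(R)$. For each $\sigma$, the pair $(E_+\sqcup \sigma^*E,\, E_+)$ is a good pair, and collapsing the retract $E_+$ gives a natural quasi-isomorphism of chain complexes $C_*(E_+\sqcup \sigma^*E,\,E_+;\F)\we C_*(\sigma^*E;\F)$ realizing the isomorphism $H_*(E\sqcup\sigma^*E,E;\F)\cong H_*(\sigma^*E;\F)$ mentioned in the statement. Naturality in $\sigma$ follows from the functoriality of pullback and of relative singular chains with coefficients in $\F$. Applying Remark \ref{mapsinto} (in its avatar for $\P Ch^{fd}(R)$ in place of $\P_\bullet(X)$), this natural weak equivalence defines a functor $\sing B\to \T_1 w\P Ch^{fd}(R)$ whose induced map $|\sing B|\times\Delta^1\to K(R)$ is the preferred homotopy between $\lambda_\F\circ p^!$ and $c_\F$.

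The main obstacle is not conceptual but bookkeeping: one must check that the natural transformation lifts through the simplicial enrichments $\R^{fd}_\bullet$ and $\P_\bullet$ used to define $A_p(E)$ and $Q(E_+)$, is compatible with the stabilization $(-)\times I$, and descends through the homotopy pullback defining $\tilde Q(E_+)$. The corresponding verification in the untwisted case is carried out in \cite{zbMATH05551153}, and since a local system of $R$-modules interacts with pullback, disjoint union, and excision in exactly the same way as a trivial coefficient system, those arguments transport to the present setting essentially verbatim; the only new input is the observation that the relative chain functor with $\F$-coefficients still sends good-pair inclusions to quasi-isomorphisms, which is standard.
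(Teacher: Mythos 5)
The paper itself does not prove this theorem — it is imported from \cite{zbMATH05551153} with the comment ``by analyzing the concrete map descriptions it can be shown'' — so there is no in-text proof to compare against. That said, your proof correctly unwinds both compositions: $\lambda_\F\circ p^!$ is indeed induced by $\sigma\mapsto C_*(E_+\sqcup\sigma^*E,E_+;\F)$ via the remark on $\tilde\alpha p^!$ and the description of $\lambda_\F^\R$, and $c_\F$ by $\sigma\mapsto C_*(\sigma^*E;\F)$, with the natural comparison being precisely the map named in the theorem. This is surely the content of the cited argument. Two small points of bookkeeping: first, the comparison map is actually a natural chain \emph{isomorphism}, not merely a quasi-isomorphism, because $C_*(E_+\sqcup\sigma^*E;\F)$ splits as $C_*(E_+;\F)\oplus C_*(\sigma^*E;\F)$ and the quotient by the first summand is literally $C_*(\sigma^*E;\F)$; you do not need the full strength of excision or a good-pair argument here. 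Second, your invocation of Remark \ref{mapsinto} is slightly miscalibrated: that remark concerns the $\T_\bullet$-direction of the bisimplicial construction (a functor to $\T_1$ yields a map into $K(R)$ by including the $\T$-one-skeleton), whereas what you need to turn a natural weak equivalence $\eta\colon F\Rightarrow G$ into a homotopy is the $w$-direction. Concretely, $\eta$ determines a functor $\sing B\times[1]\to w\P Ch^{fd}(R)$; composing with $A\mapsto (0\rightarrowtail A)$ and the one-skeleton inclusion gives the required map $|\sing B|\times\Delta^1\to K(R)$. The phrase ``defines a functor $\sing B\to\T_1 w\P Ch^{fd}(R)$'' conflates the cofibration direction with the weak-equivalence direction — the objects of $\T_1$ are cofibration pairs, not weak equivalences — so this step should be reworded. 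The remaining discussion of compatibility with stabilization and the homotopy pullback defining $\tilde Q$ is appropriately flagged and is indeed the part inherited verbatim from \cite{zbMATH05551153}.
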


This will be the starting point for us to define smooth parametrized torsion.

\subsection{Unreduced and reduced smooth parametrized torsion}

This section is where we depart slightly from \cite{zbMATH05551153} in that our results will be a little bit more general than there. The idea here is that if we can show that the map $c_\F:|\sing B|\to K(R)$ is homotopic to the constant map with value the 0 complex $0\in K(R)$ then we get a lift 
	$$|\sing B|\to \hofib\left(\tilde Q(E_+)\stackrel{\lambda_\F}{\longrightarrow} K(R)\right)_0=:Wh_\F(E)$$
where we call the codomain the Whitehead space of $E.$ This will not always be the case, but the following condition is almost sufficient:

\begin{Definition}\label{unipotent} Let $E\to B$ be as before and let $\F$ be a complex local system on $E.$ Let $B$ be connected, $b_0\in B$ be the basepoint, and let $F$ be the fiber over $b_0.$ We say $\pi_0 B$ acts unipotently on $H_*(F;\F)$ if there exists a filtration of $H_*(F;\F)$ by $\pi_1 B$ submodules 
	$$0=V_0(F)\subset \ldots \subset V_k(F)=H_*(F;\F)$$
such that $\pi_1 B$ acts trivially on the quotients $V_i(F)/V_{i-1}(F).$
\end{Definition}

\begin{Theorem}\label{contraction} Let $E\to B$ be a bundle, $B$ path-connected, $b_0\in B$ the basepoint, $F_{b_0}$ the fiber over the basepoint,  $\F\to E$ a complex local system such that $\pi_1 B$ acts unipotently on $H_*(F,\F).$ Then there exists a preferred homotopy from the map $c_\F:|\sing B|\to K(\C)$ to the constant map with value the complex $H_*(F_{b_0},\F)\in K(\C)$ (with trivial boundary maps).
\end{Theorem}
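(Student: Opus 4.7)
The plan is to construct the preferred homotopy as a zigzag of natural transformations of functors $\sing B \to w\P Ch^{fd}(\C)$; by Remark \ref{mapsinto}, any such natural transformation descends to a homotopy between the induced maps to $K(\C)$. So it suffices to build an appropriate zigzag connecting the functor defining $c_\F$ to the constant functor with value $H_*(F_{b_0};\F)$ (with zero differential).

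I would carry this out in three steps. First, replace $C_*(\sigma^*E;\F)$ by its homology viewed as a complex with zero differential. The canonical filtration of $C_*(\sigma^*E;\F)$ by good truncations $\tau_{\leq n}$ has successive subquotients $H_n(\sigma^*E;\F)[n]$ and is strictly natural in $\sigma$. Iterated Waldhausen additivity, applied inside the bisimplicial $S_\bullet$-construction, then produces a preferred homotopy between $c_\F$ and the ``homology functor'' $c^H_\F:\sigma\mapsto H_*(\sigma^*E;\F)$ (with zero differential). Second, identify $c^H_\F$ with the $K$-theory map associated to the local system $\L := H_*(F;\F)$ on $B$: the inclusion $F_{\sigma(0)}\into \sigma^*E$ of the fiber over the $0$-vertex of $\Delta^k$ is a homotopy equivalence (since $\Delta^k$ is contractible) and the induced natural isomorphism of fibers of $\L$ with $H_*(\sigma^*E;\F)$ exhibits $c^H_\F$ as the map built from $\L$. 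Third, the unipotent filtration $0 = V_0 \subset V_1 \subset \cdots \subset V_k = H_*(F_{b_0};\F)$ extends, by parallel transport, to a filtration of $\L$ by sub-local-systems on $B$, since each $V_i$ is $\pi_1 B$-invariant by hypothesis. Because $\pi_1 B$ acts trivially on each quotient $V_i/V_{i-1}$, the quotient local system is canonically trivial, and hence its associated $K$-theory map is genuinely constant with value $[V_i/V_{i-1}]$. A final application of additivity assembles these constants into a preferred homotopy between the map associated to $\L$ and the constant map with value $[H_*(F_{b_0};\F)]$.

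The main obstacle is the coherence bookkeeping in Step 1: the quasi-isomorphism $C_* \we H_*$ is only canonical up to a tower of higher homotopies, so one must carry out the truncation filtration inside the bisimplicial $S_\bullet$-construction and rely on Waldhausen's additivity theorem to package all the necessary coherences uniformly in $\sigma$. The unipotency hypothesis is essential in the third step: only for local systems admitting a filtration with trivial subquotients can the ``sum of constants'' be assembled coherently over all of $B$. Without unipotency one would expect genuine higher obstructions in $K(\C)$ to appear, arising from the monodromy representation $\pi_1 B \to \Aut H_*(F;\F)$, and indeed the statement of the theorem would fail.
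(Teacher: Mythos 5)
Your proposal follows essentially the same three-step strategy as the paper's own proof: first pass from chain complexes to their homology via a truncation filtration and Waldhausen additivity (the paper's preliminary Lemma, using a cutoff $P_qC$ rather than good truncations, but the same idea); second, identify the homology functor with one built from the fibers via the isomorphism $H_*(F_{\sigma(0)};\F)\cong H_*(\sigma^*E;\F)$; and third, use the unipotent filtration, $\pi_1B$-invariance to transport it over all of $B$, triviality of the subquotients, and additivity again to contract to the constant $H_*(F_{b_0};\F)$. The only differences are cosmetic (phrasing Step~2 in terms of a local system $\L$ on $B$ rather than the explicit functor $\sigma\mapsto H_*(F_{\sigma(0)};\F)$), so the proposal is correct and matches the paper's argument.
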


\begin{Remark} While this theorem is never directly stated in \cite{zbMATH05551153}, it is remarked that the theorems there can be generalized (even further than we need it) to include this version. 
\end{Remark}

Let $k:|w\P Ch^{fd}(\C)|\to K(\C)$ be the inclusion. We first prove the following lemma:

\begin{Lemma} Let $H:w\P Ch^{fd}\to w\P Ch^{fd}$ be the functor that assigns to each complex its homology complex. Then there is a preferred homotopy 
	$$k\simeq k\circ |H|$$
\end{Lemma}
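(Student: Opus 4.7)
The plan is to apply Waldhausen's additivity theorem to a canonical filtration of each complex by cycles and boundaries. For each complex $C \in w\P Ch^{fd}(\C)$, consider the natural three-step filtration by subcomplexes
$$0 \subset B(C) \subset Z(C) \subset C,$$
where $B(C)$ and $Z(C)$ are the subcomplexes of boundaries and cycles respectively, both viewed as complexes with zero induced differential. The successive subquotients are $B(C)$, $Z(C)/B(C) = H(C)$, and $C/Z(C) \cong B(C)[-1]$, where the last identification is provided by the differential of $C$. Since $\C$ is a field, every subquotient is projective, so the filtration consists of cofibrations in the Waldhausen category.

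Iterated application of Waldhausen's additivity theorem---rephrased in Thomason's $\T_\bullet$-model as a natural functor $w\P Ch^{fd}(\C) \to \T_3 \P Ch^{fd}(\C)$---then yields a preferred homotopy of maps $|w\P Ch^{fd}(\C)| \to K(\C)$,
$$k \;\simeq\; k\circ |B|\,+\,k\circ |H|\,+\,k\circ |B[-1]|,$$
where $+$ refers to the H-space structure on $K(\C)$. To cancel the two boundary contributions, I apply additivity a second time, to the natural short exact sequence
$$0 \to B(C) \to \textnormal{Cone}(\textnormal{id}_{B(C)}) \to B(C)[-1] \to 0.$$
This gives a preferred homotopy $k\circ |\textnormal{Cone}(\textnormal{id}_B)| \simeq k\circ|B| + k\circ|B[-1]|$. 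Since $\textnormal{Cone}(\textnormal{id}_{B(C)})$ is canonically chain-contractible, the left hand side is preferred-homotopic to the constant basepoint map, so $k\circ|B| + k\circ|B[-1]| \simeq 0$. Concatenating all the homotopies produces the desired $k \simeq k\circ |H|$.

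I expect the main obstacle to be naturality: the assignments $C \mapsto B(C)$ and $C \mapsto Z(C)$ are \emph{not} exact functors on $w\P Ch^{fd}(\C)$, since they need not preserve arbitrary quasi-isomorphisms (a quasi-iso between two acyclic complexes may induce a non-iso on boundaries). The point of packaging the whole filtration as a single natural functor into the category of 3-step filtered complexes, rather than tracking $B$ and $Z$ separately, is precisely to sidestep this issue: the simplicial structure of the $S_\bullet$-construction delivers the required homotopies in $K(\C)$ directly from the natural short exact sequences, without demanding that the individual associated-graded functors be separately exact.
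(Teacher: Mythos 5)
Your proof takes a genuinely different decomposition from the paper's: the paper filters $C$ by the canonical truncations $P_q C$ (with $(P_q C)_{q+1} := \partial C_{q+1}$) and applies additivity to the tower $P_q C \twoheadrightarrow P_{q-1}C$ whose layers $Q_q C = \ker(P_q C \to P_{q-1}C)$ are quasi-isomorphic to $H_q(C)$ concentrated in a single degree; you instead use the three-step filtration $0 \subset B(C) \subset Z(C) \subset C$ and then a cone trick to cancel the two boundary layers.

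Unfortunately the naturality concern you raise in the last paragraph is not sidestepped by the packaging trick; it is a genuine gap. The map $k$ is defined on $|w\P Ch^{fd}(\C)|$, so the input to additivity must be a functor defined on $w\P Ch^{fd}(\C)$, and it must land in $w\T_3 \P Ch^{fd}(\C)$, since the additivity theorem is a statement about the subcategories of weak equivalences inside the $S_\bullet$/$\T_\bullet$-construction. Weak equivalences in $\T_3 \P Ch^{fd}(\C)$ are \emph{level-wise} quasi-isomorphisms of filtered complexes. But a quasi-isomorphism $f:C\to D$ does not induce a quasi-isomorphism $B(C)\to B(D)$ (take $C = (\C \xrightarrow{\;\mathrm{id}\;} \C)$, $D=0$; then $B(f):\C\to 0$). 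So the assignment $C\mapsto (0\subset B(C)\subset Z(C)\subset C)$ is a functor $\P Ch^{fd}(\C)\to \T_3\P Ch^{fd}(\C)$ on the full categories, but it does not restrict to $w\P Ch^{fd}(\C)\to w\T_3\P Ch^{fd}(\C)$. Without that, there is no map $|w\P Ch^{fd}(\C)|\to |w\T_\bullet \T_3\P Ch^{fd}(\C)|$ to feed into additivity, and the asserted homotopy $k \simeq k|B| + k|H| + k|B[-1]|$ is not produced. The same obstruction then also defeats the cone step, which presupposes $B$ is a functor on $w\P Ch^{fd}(\C)$.

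The reason the paper's truncations work is precisely that each $P_q C$ and each layer $Q_q C$ has homology expressible purely in terms of $H_*(C)$ (namely $H_*(Q_q C)$ is $H_q(C)$ in degree $q$ and zero otherwise), so a quasi-isomorphism $C\to D$ induces quasi-isomorphisms on every $P_q$ and $Q_q$, giving an honest functor $w\P Ch^{fd}(\C)\to w\T_n\P Ch^{fd}(\C)$. Any filtration you use must have this quasi-isomorphism-invariance built into each layer; since $B(C)$ and $Z(C)$ carry the zero differential, their homology is themselves, which is not a quasi-isomorphism invariant, so they cannot serve as layers.
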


\begin{proof} This is (even a little more generally) proved in \cite{zbMATH05551153}. The idea is to define a complex $P_q C$ for a complex $C$ by cutting of $C$ at $C_q$ and setting $(P_q C)_{q+1}:=\partial C_{q+1}.$ Then if $Q_qC$ is the kernel $P_qC\to P_{q-1}C$ it is a complex homotopy equivalent to its homology complex which has only one non-zero entry being $H_q(C).$ Now we use Waldhausen's additivity theorem repeatedly and we are done.
\end{proof}

\begin{proof}[Proof of the Theorem]First of all the lemma gives us a preferred homotopy from $c_\F$ to the map constructed by the functor $\sing B\to w\P Ch^{fd}(\C)$ with $\sigma\mapsto H_*(\sigma^*E;\F).$ Now there is an isomorphism
	$$H_*(F_{\sigma_0};\F)\cong H_*(\sigma^* E; \F),$$
where $\sigma_0$ denotes the base point of $\sigma.$ This now gives a homotopy from $c_\F:|\sing B|\to K(\C)$ to the map defined by the functor
	$$H^0_\F:\sigma\mapsto H_*(F_{\sigma_0};\F)$$
We can summarize this as $c_\F\simeq k\circ |H^0_\F|.$ 

Now we have a filtration
	$$0=V_0(F_{b_0})\subset V_1(F_{b_0})\subset \cdots \subset V_n(F_{b_0})=H_*(F_{b_0};\F)$$
such that $\pi_1 B$ acts trivially on the subquotients (and all the $V_i(F_{b_0})$ are $\pi_1 B$ modules.) This is exactly what it means for $\pi_1 B$ to act unipotently. Since $B$ is path connected, there is an isomorphism $f_b:F_{b_0}\to F_b$ and we set $V_i(F_b):=f^*_b(V_i(F_{b_0}))$. This map $f_b:F_{b_0}\to F_b$ might not be canonical but because of the $\pi_1 B$ equivariance of the original filtration the resulting filtration
		$$0=V_0(F_{b})\subset V_1(F_{b})\subset \cdots \subset V_n(F_{b})=H_*(F_{b},\F)$$
does not depend on the choices made. Now we look at the functor $v:\sing B\to w\P Ch^{fd}(\C)$ given by
	$$v(\sigma):=\bigoplus V_i(F_{\sigma_0})/V_{i-1}(F_{\sigma(0)}).$$
Waldhausen's additivity theorem gives a homotopy $k\circ |H^0_\F|\simeq k\circ |v|.$ Overall we have
	$$c_\F\simeq k\circ |v|.$$

By the unipotence assumption we get that the induced isomorphism
	$$V_i(F_{\sigma(0)})/V_{i-1}(F_{\sigma(0)})\cong V_i(F_{b_0})/V_{i-1}(F_{b_0})$$
does not depend on any choices (of the path lifting used to define the isomorphism) and hence we get a natural isomorphism between the functor $v$ and the functor $v_{b_0}:\sing B\to w\P Ch^{fd}(\C)$ given by $v_{b_0}(\sigma):=\oplus V_i(F_{b_0})/V_{i-1}(F_{b(0)}).$ So we have a homotopy $k\circ |v|\simeq k\circ |v_{b_0}|$ and conclusively a homotopy
	$$c_\F\simeq k\circ |v_{b_0}|.$$
Now an application of the additivity theorem gives the preferred homotopy between $c_\F$ and the constant map to $H_*(F_{b_0},V)\in K(\C).$ We shall call this homotopy $\omega_\F.$
\end{proof}

\begin{Definition} Let $p:E\to B$ be a compact manifold bundle with $B$ connected. Let $F_{b_0}$ be the fiber over the basepoint and let $\F$ be a unipotent complex local system over $E.$ We view the homology complex $H_*(F_{b_0};\F)$ as an element in $K(\C)$ and we define the unreduced Whitehead space
	$$Wh_\F(E,b_0):=\hofib(\tilde Q(E_+)\stackrel {\lambda_\F}\to K(\C))_{H_*(F_{b_0};\F)}.$$
The unreduced smooth torsion of $p$ is the map $\tilde \tau_\F:|\sing B|\to Wh_\F(E,b_0)$ determined by the transfer $p^!$ and the homotopy $\omega_\F.$
\end{Definition}

We want to make this independent of the basepoint choice. The answer is the reduced torsion:

\begin{Definition} For a compact manifold bundle $p:E\to B$ with base point $b_0\in B$  and unipotent complex local system $\F$ on $E$ we define the Whitehead space 
	$$Wh_\F(E):=\hofib(\tilde Q(E_+)\to K(\C))_0.$$
The reduced smooth torsion $\tau_\F(p)$ is the map $|\sing B|\to Wh_\F(E)$ obtained from $p^!$ by subtracting the element $p^!(b_0)\in \tilde Q(E_+)$ from the map $p^!$ and the path $\omega_{\F|{b_0}\times I}$ from the contracting homotopy $\omega_\F.$
\end{Definition}

\section{Cohomological Torsion}

Let $p:E\to B$ be a bundle of compact manifolds and $\F$ a finite, unipotent, complex local system. By definition $\F$ is classified by a map
	$$\phi: E\to BG$$
where $G$ is a finite group. We will use this map to pull back a universal cohomological class along our torsion maps. Before we can do this, we need to introduce a compact approximation model to $BG:$

More precisely, let $G$ be a group and let $\rho: G\into U(m)$ be an inclusion corresponding to a faithful unitary action of $G$ on $\C^m.$ We will describe high dimensional manifolds with boundary $EG(N)$ each equipped with a free $G$-action and $G$-equivariant connecting inclusions $EG(N)\into EG(N+1).$ We will see that the $EG(N)$ are highly connected meaning that the colimit
	$$EG:=\colim_N EG(N)$$
is contractible. So if we set $BG(N)$ to be the quotient $BG(N):=EG(N)/G$ we see that
	$$BG:=\colim_N BG(N)/G\cong EG/G$$
is a classifying space for the group $G$ with universal bundle $EG\to BG.$

\subsection{The Set $EG'(N)$}

In this section, we will define a set $EG^{coarse}(N)$ on which $G$ acts freely. Again, let $G$ be a group and $\rho:G\into U(m)$ an embedding. Since $G$ acts on $\C^m$ it will also act on the vector space $\Hom(\C^N,\C^m)\cong \C^{Nm}$ and we have:

\begin{Lemma} If $G$ acts faithfully on $\C^m,$ it acts freely on the subset $\Theta(N)\subset \Hom(\C^N,\C^m)$ of surjective maps.
\end{Lemma}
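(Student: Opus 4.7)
The plan is to unwind the definitions and reduce the freeness statement to the faithfulness of $\rho$ in one short step. The action of $G$ on $\Hom(\C^N,\C^m)$ induced from its action on $\C^m$ is by post-composition: for $g\in G$ and $f\in\Hom(\C^N,\C^m)$, one sets $(g\cdot f)(v)=\rho(g)f(v)$. What needs to be shown is that if $f\in\Theta(N)$ (i.e.\ $f$ is surjective) and $g\cdot f=f$, then $g=e$.

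First I would translate the fixed-point equation $g\cdot f=f$ into the identity $\rho(g)f(v)=f(v)$ for every $v\in\C^N$. Then I would use surjectivity of $f$: since the image of $f$ is all of $\C^m$, every $w\in\C^m$ can be written as $w=f(v)$ for some $v$, and the equation becomes $\rho(g)w=w$ for all $w\in\C^m$. Thus $\rho(g)=\id_{\C^m}$. Because $\rho$ is assumed faithful (that is, injective as a homomorphism $G\to U(m)$), we conclude $g=e$.

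There is no real obstacle here; the one thing to be careful about is that ``faithful'' is interpreted as injectivity of the homomorphism $\rho$, which is consistent with the inclusion notation $\rho:G\hookrightarrow U(m)$ used in the paper. The surjectivity hypothesis on $f$ is used exactly once and is essential — without it one could only conclude that $\rho(g)$ is the identity on $\im f$, which would not force $\rho(g)=\id$ and hence would not force $g=e$. This is why the lemma fails on all of $\Hom(\C^N,\C^m)$ and one must restrict to $\Theta(N)$.
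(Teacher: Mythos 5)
Your proof is correct and follows essentially the same reasoning as the paper's: both reduce freeness to faithfulness via surjectivity of $f$ (you show the stabilizer of $f$ is trivial, while the paper equivalently shows that distinct group elements act differently on $f$). The paper additionally records, as a preliminary, that the action actually restricts to $\Theta(N)$ — post-composition with the invertible $\rho(g)$ preserves surjectivity — which is worth stating explicitly before asserting freeness on $\Theta(N)$.
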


\begin{proof} The fact that $G\subset U(m)$ guarantees that the action of $G$ preserves the rank of the linear transformations it acts on and hence it restricts to an action on $\Theta(N).$ Assume we have $h,g\in G$ and $f\in \Theta(N)$ with $h\neq g.$ Since the action on $\C^m$ is faithful, there must be a $y\in \C^m$ with $hy\neq gy.$ So we find an $x\in \C^N$ with $f(x)=y$ and we see $hf\neq gf.$
\end{proof}

Now let $S^{2Nm-1}\subset \C^{Nm}$ be the unit sphere and let $\Phi(N)\subset \Hom(\C^N,\C^m)\cap S^{2Nm-1}$ be the set of non-surjective maps. We define
	$$EG^{coarse}(N):=\Theta(N)\cap S^{2Nm-1}= S^{2Nm-1}\backslash \Phi(N).$$
As the action of $G$ is unitary and free on $\Theta(N),$ it restricts to a free action on $EG^{coarse}(N).$ Unfortunately, this is not a compact manifold. To overcome this, we will analyze the structure of $\Phi(N)$ (it is a stratified space) and then coherently thicken it up to have a codimension 0 smooth submanifold with boundary $\Phi^{smooth}(N)\subset S^{2Nm-1}$ and eventually define $EG(N):=S^{2Nm-1}\backslash \Phi^{smooth}(N)\cup\partial\Phi^{smooth}(N).$

\subsection{The structure of the set $\Phi(N)$}

\begin{Proposition} The set $\Phi(N)\cap S^{2Nm-1}\subset S^{2Nm-1}$ is a stratified closed space with every stratum having codimension at least $2N-2m.$  Consequently, the space $S^{2Nm-1}\backslash \Phi(N)$ is $2N-2m$ connected (for $N$ large enough).
\end{Proposition}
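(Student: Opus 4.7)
The plan is to stratify $\Phi(N)$ by rank and then apply stratified transversality. Set $\Phi_r(N):=\{f\in S^{2Nm-1}:\rk(f)=r\}$ for $0\le r\le m-1$, so that $\Phi(N)=\bigsqcup_{r=0}^{m-1}\Phi_r(N)$. The set $\Phi(N)$ is closed in $S^{2Nm-1}$ because it is cut out globally by the vanishing of all $m\times m$ minors of the matrix representing $f\in\Hom(\C^N,\C^m)$, and the closure relations $\overline{\Phi_r(N)}=\bigsqcup_{s\le r}\Phi_s(N)$ follow from lower semicontinuity of rank. This gives $\Phi(N)$ the structure of a stratified closed subspace of $S^{2Nm-1}$.

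For the codimension bound I would invoke the classical fact that the rank-$r$ locus in $\Hom(\C^N,\C^m)$ is a smooth complex submanifold of complex dimension $r(N+m-r)$, visible either via the transitive action of $\mathrm{GL}_N(\C)\times \mathrm{GL}_m(\C)$ on rank-$r$ matrices by $(g,h)\cdot f=hfg^{-1}$, or via explicit local coordinates at a fixed rank-$r$ model matrix using the Schur complement. The real codimension is therefore $2(N-r)(m-r)$, and because each rank stratum is $\C^*$-invariant it meets the unit sphere transversely, so $\Phi_r(N)$ has the same real codimension $2(N-r)(m-r)$ inside $S^{2Nm-1}$. The minimum over $r\in\{0,\ldots,m-1\}$ is attained at $r=m-1$ and equals $2(N-m+1)\ge 2N-2m$, proving the codimension assertion.

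The connectivity claim then follows by standard stratified transversality. Given a map $(D^k,S^{k-1})\to (S^{2Nm-1},S^{2Nm-1}\backslash\Phi(N))$ with $k\le 2N-2m+1$, a small perturbation rel $S^{k-1}$ makes the map transverse to every stratum $\Phi_r(N)$; since the codimension of each stratum strictly exceeds $k$, transversality forces the perturbed map entirely into the complement. Hence the inclusion $S^{2Nm-1}\backslash\Phi(N)\hookrightarrow S^{2Nm-1}$ is $(2N-2m+1)$-connected, and combining this with the fact that $S^{2Nm-1}$ is itself $(2Nm-2)$-connected yields $(2N-2m)$-connectedness of the complement as soon as $2N-2m\le 2Nm-2$. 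The main technical obstacle is that transversality must be justified against the entire stratified set rather than a single smooth submanifold; this I would handle by induction on the rank filtration, first perturbing off the lowest-dimensional (closed) stratum and then working inside the complement of a tubular neighborhood where the next stratum is smoothly embedded. Alternatively, since the rank stratification is a classical example of a Whitney stratification, one may cite stratified transversality as a black box.
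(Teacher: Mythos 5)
Your proof is correct and follows the same overall strategy as the paper: stratify $\Phi(N)$ by rank, bound the codimension of each stratum, and deduce connectivity of the complement by general position. The execution differs in two places, both to your advantage. First, where you invoke the classical fact that the rank-$r$ locus in $\Hom(\C^N,\C^m)$ is a smooth submanifold of complex codimension $(N-r)(m-r)$ (via the $\Gl_N\times\Gl_m$ action or a Schur complement chart) and use $\C^*$-invariance to pass transversally to the sphere, the paper instead writes each rank stratum as the $\Sigma_N$-orbit of an explicit parametrized slice $\Phi_i(N)$ (first $i$ columns linearly independent, remaining columns in their span) and verifies smoothness on the sphere by a direct gradient computation on the norm equation; your route is both shorter and gives the sharp bound $\codim\geq 2(N-m+1)=2N-2m+2$, whereas the paper's arithmetic only yields $2N-2m$ (and is in fact off by one in a way that doesn't affect the stated conclusion). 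Second, your handling of stratified transversality --- either by induction on the rank filtration or by citing that the rank stratification is Whitney --- is more explicit than the paper's appeal to codimension alone, which is the one point a careful reader might want spelled out. The trade-off is that your version leans on the classical determinantal-variety facts as a black box, while the paper's explicit parametrization is self-contained and, more importantly, is reused in the next step to build $U(m)$- and $\Sigma_N$-equivariant tubular neighborhoods of the strata when thickening $\Phi(N)$ into $\Phi^{smooth}(N)$, so the apparent inefficiency there is doing work later.
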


\begin{proof} Let $\Sigma_N$ be the symmetric group on $N$ elements. This acts on $\C^{Nm}\cong (\C^m)^N$ by permutation and this action commutes with the action of $G.$ We can view $\Phi(N)$ as the set of $m\times N$-matrices of rank less than $m$ and hence we can write it as the union
	$$\Phi(N)=\Sigma_N \Phi_1(N)\cup \Sigma_N \Phi_2(N)\cup\cdots\cup \Sigma_N \Phi_{m-1}(N),$$
where $\Sigma_N \Phi_i(N)$ denotes the orbit under the $\Sigma_N$-action of the set 
	$$\Phi_i(N):=\{(v_1,v_2,\ldots, v_{i}, a_{11}v_1+\ldots+a_{1i}v_i,\ldots,a_{N-i,1}v_1,\ldots, a_{N-i,i}v_i)\}\cap S^{2Nm-1},$$
where the $v_i$'s are linearly independent vectors in $\C^m$ and the $a_{jl}\in \C$'s are any scalars. In other words, $\Sigma_N\Phi_i(N)$ is the set of $m\times N$ matrices of rank $i$ which are on the unit sphere in $\C^{Nm}.$

Notice that the points in $\Phi_i(N)$ are defined by the equation 
	$$\sum_{l=1}^i\left(1+\sum_{j=1}^{N-i}|a_{jl}|^2\right)|v_l|^2+\sum_{j=1}^{N-1}\sum_{l< l'}\Re(\bar{a_{jl}}a_{jl'}\langle v_l,v_{l'}\rangle)=1,$$
The gradient on the left (in the components of the $v_is$'s and the $a_{jl}$'s) can be calculated as
	$$\left(\begin{array}{c}
		\left(1+\sum_{j=1}^{N-i}|a_{j1}|^2\right)2v_1 + \sum_{j=1}^{N-i}\sum_{l'=2}^i a_{jl'}\bar{a_{j1}} v_{l'}\\
		\vdots\\
		\sum_{j=1}^{N-i}\sum_{l=1}^{i-1} a_{ji}\bar{a_{jl}}v_l+\left(1+\sum_{j=1}^{N-i}|a_{ji}|^2\right)2v_1\\
		\vdots

		\end{array} \right)
	$$
Here we view the vectors $v_l$ as real vectors in $\Reals^{2m}$ (though we still multiply them by complex numbers). Since the $v_i$ are complex linearly independent vectors the above gradient can never be 0. So $\Phi_i(N)$ is a submanifold of $S^{2Nm-1}\backslash \Phi_{i-1}(N).$

Notice that the intersection of $\Phi_i(N)$ with the hyperplanes given by fixing the $v_i$'s (at any admissable value) is closed whereas the intersection with the hyperplanes given by fixing the $a_{jl}$'s is neither open nor closed (in that hyperplane) unless $i=1$ in which  case it will be a closed 1-sphere. Analyzing this structure further, we can see that 
	$$\Sigma_N \Phi_1(N)\cup \Sigma_N \Phi_2(N)$$
is closed and inductively we see that $\Phi(N)$ must be closed where every point is contained in a stratum with dimension at most $2m(m-1)+2(N-m+1)(m-1).$ So every point is contained in a submanifold of $S^{2Nm-1}$ with codimension at least
	$$2Nm-1-2m(m-1)-2(N-m+1)(m-1)=2N-2m$$
\end{proof}

The next step is to carefully define a $U(m)$-equivariant tubular neighborhood of $\Phi(N)$ to form $\Phi^{thick}(N)$ which we then round off to get $\Phi^{smooth}(N).$ First notice that $U(m)$ acts on each of the $\Phi_i(N)$ and hence it will act on the normal bundles $\mathcal{N}\Phi_i(N)$ in $S^{2Nm-1}\backslash \Phi_{i-1}(N).$ Therefore we can find a closed tubular neighborhood $\Phi^{thick}_i(N)$ of $\Phi_i(N)$ such that $U(m)$ still acts on $\Phi^{thick}_i(N)$ and since the actions of $U(m)$ and $\Sigma_N$ commute, we can choose these tubular neighborhoods $\Sigma_N$-equivariantly. The union 
	$$\bigcup_{\sigma\in \Sigma_N} \sigma\Phi^{thick}_i(N)$$
might not be a manifold (since it may have corners), but it is possible to find neighborhoods of its corners on which $U(m)$ acts freely because the part $\Phi(N)$ on which $U(m)$ does not act freely is well away from the boundary of this union. Now we can round of the corners (as in \cite{Igusa1}) and since $U(m)$ is compact, we can average the rounding over the $U(m)$-action to get a manifold $\Phi^{smooth}_{i,\Sigma}(N)$ which still has a $U(m)$-action. 

We inductively take the unions
	$$\bigcup_{i=1}^l \Phi^{smooth}_{i,\Sigma}(N) \cup \Phi^{smooth}_{l+1,\Sigma}(N)$$
and by the same argument as above, we  can round off the corners $U(m)$-equivariantly, so that we eventually obtain the a smooth codimension 0 submanifold with boundary
	$$\Phi^{smooth}(N)\subset S^{2Nm-1}.$$
Analogously to $\Phi(N)$ being closed, we see that $\Phi^{smooth}(N)$ is also closed. It is furthermore obtained by forming tubular neighborhoods and rounding off corners and therefore it has the homotopy type of a codimension $2N-2m$ space. 

\begin{Definition} With the considerations above we set
	$$EG(N):=(S^{2Nm-1}\backslash B^{smooth}(N))\cup \partial B^{smooth}(N)$$
and we see that it is a $2N-2m$ connected manifold with boundary that has a free $U(m)$-action.
\end{Definition}

\subsection{The spaces $EG$ and $BG$}

Recall that $\Theta(N)$ denotes the subset of surjective maps in $\Hom(\C^N,\C^m).$ There clearly is an inclusion $\Theta(N)\into \Theta(N+1)$ (given by trivial extension to the $N+1$st component). This gives rise to an inclusion $S^{2Nm-1}\backslash \Phi(N)\into S^{2(N+1)m-1}\backslash \Phi(N+1).$ If we pay attention and round of corners coherently (by inductively ensuring that the rounding for $N+1$ agrees with the rounding for $N$ under the inclusion) we also get inclusions
	$$\iota :EG(N)\to EG(N+1)$$
and hence we can define

\begin{Definition} We set	
$$EG:=\colim_N EG(N).$$
\end{Definition}

We see immediately that $EG$ is contractible (it is an arbitrarily highly connected CW space) and has a free $U(m)$-action. So in particular it has a free $G$-action (for any group with an embedding $G\into U(m)$) and we see that the $G$-bundle
	$$EG\to EG/G=:BG$$
is universal. Notice that we also have
	$$BG\cong \colim EG(N)/G.$$

\subsection{Naturality of the construction}

First let $G\into U(m)\subset U(m+1)$ be a faithful representation of dimension $m$ that we can also view as a faithful representation of dimension $m+1$ which is trivial on the last component. We get an inclusion 
	$$\Hom(\C^N,\C^m)\into \Hom(\C^{N+1},\C^{m+1})$$
which is given by extending a homomorphism by the identity on the last component. Notice that this restricts to an inclusion between the surjective homomorphisms and if the smoothing is done correctly gives a $G$-equivariant inclusion
	$$EG(m,N)\into EG(m+1,N+1)$$
where the $m$-arguments indicate the dimension of the representation of $G$ that we consider. In the colimit this gives a map
	$$EG(m)\stackrel{\simeq}{\longrightarrow}EG(m+1)$$
and consequently $BG(m)\stackrel{\sim}{\to}BG(m+1).$

\begin{Remark} In the remainder of this paper we will always consider groups $G$ with the an implied choice of embedding $G\into U(m).$ We just saw that this is independent of the choice of the dimension $m$ of the embedding.
\end{Remark}

On the other hand, assume that we have two groups $G\into U(m)$ and $G'\into U(m').$ Then their product $G\times G'$ defines a faithful representation of dimension $m+m'.$ We have maps
	$$\Hom(\C^N,\C^m)\times \Hom(\C^N,\C^{m'})\to \Hom(\C^{2N},\C^{m+m'})$$
given in the obvious way. These preserve surjective homomorphisms and eventually give rise to maps
	$$EG(N)\times EG'(N)\to E(G\times G')(2N)$$
which induce a $G\times G$-equivariant homotopy equivalence
	$$EG\times EG'\simeq E(G\times G')$$
and consequently a homotopy equivalence $BG\times BG'\to B(G\times G').$

\subsection{Defining the cohomological torsion}

We return to the task at hand of defining cohomological higher torsion classes. For this let $E\to B$ be a smooth manifold bundle and $\F\to E$ a finite, complex local system on $E$ corresponing to a representation $\rho:\pi_1(E)\to G\into U(m)$ for a finite group $G.$ This also gives a map $\phi:E\to BG.$ Since $E$ is compact, there must be an $N$ such that $\phi$ restricts as
	$$E\to BG(N)\into BG.$$
By abuse of notation we will call the restriction $E\to BG(N)$ also $\phi.$ Now $\phi$ induces a map $\phi_Q:\tilde Q(E_+)\to \tilde Q(BG(N)_+).$ We get the following diagram

$$\xymatrix{
																								&	Wh_\F(E)\ar[d]\ar@{.>}[r]^-{\phi_{Wh}}						&	Wh_\F(BG(N))\ar[d]\\
		|\sing B|\ar[r]^{p^!}\ar[ur]^{\tau_\F(p)}		&	\tilde Q(E_+)\ar[r]^{\phi_Q}\ar[d]^-{\lambda_\F}	&	\tilde Q(BG(N)_+)\ar[d]^{\lambda_\F}\\
																								&	K(\C)\ar[r]^=																			&	K(\C).
		}
	$$

By inspection, the lower square commutes up to a preferred homotopy: The upper right composition corresponds to the functor that sends a retractive space $X$ over $E$ to the chain complex $C_*(X\sqcup_E BG(N), BG(N);\F)$ and the lower left corner comes from the functor that sends such an $X$ to the chain complex $C_*(X, E; \F).$ This preferred homotopy induces the map $\phi_{Wh}:Wh_\F(E)\to Wh_\F(BG(N)).$

While it is easy to see that $Q(BG(N)_+)\to Q(BG(N+1)_+)$ is a cofibration (it is essentially induced by a level-wise inclusion of simplicial sets), it is not clear if the pull-back $\tilde Q(BG(N))\to \tilde Q(BG(N+1))$ will since pull-back does not behave well with respect to cofibrations. Therefore, consider the following commutative diagram
	$$\xymatrix{
		\tilde Q(BG(1)_+)\ar[r]^{i_1}	&	\tilde Q(BG(2)_+)\ar[r]^{i_2}	&	\tilde Q(BG(3)_+)\ar[r]^{i_3}	&	\cdots\\
		\tilde Q(BG(1)_+)'\ar[u]^\simeq_{j_0}\ar@{>->}[r]^{i'_1}	&	\tilde Q(BG(2)_+)\ar[u]^\simeq_{j_1}\ar@{>->}[r]^{i'_2}	&	\tilde Q(BG(3)_+)'\ar[u]^\simeq_{j_2}\ar@{>->}[r]^{i'_3}	&	\cdots}
	$$
where the lower horizontal arrows are cofibrations so that the lower diagram is a cofibrant replacement of the upper diagram (we will see in Remark \ref{independence} that our constructions do not depend on the cofibrant replacement chosen). So we get
	$$\hocolim_N \tilde Q(BG(N)_+)\simeq \colim_N \tilde Q(BG(N)_+)'=:\tilde Q(BG_+).$$

Composition of $\lambda_\F$ with $j_N$ gives maps $\lambda'_\F:\tilde Q(BG(N)_+)'\to K(\C)$ with homotopy fiber $Wh'_\F(BG(N)).$ Together they define a map $\lambda'_\F:\tilde Q(BG_+)\to K(\C)$ and we call its homotopy fiber $Wh_\F(BG).$ After rationalization we get a diagram
	$$\xymatrix{
		Wh_\F(BG)_\Q\ar@{.>}[r]\ar[d]	&	\Omega K(\C)\ar[d]\\
		\tilde Q(BG_+)_\Q\ar[r]\ar[d]	&	{*}_\Q\ar[d]\\
		K(\C)\ar[r]^=	&	K(\C).
		}
	$$
Notice that $\tilde Q(BG(N)_+)$ is rationally highly connected, so we get that $\tilde Q(BG_+)$ is rationally contractible. This gives a preferred homotopy making the bottom square commute up to homotopy and giving the dotted map $Wh_\F(BG)\to \Omega K(\C)$ between homotopy fibers. 

There are the Borel regulators $b_i\in H^{2i+1}(K(\C);\Reals)$ defined in \cite{zbMATH03495395} which we can pull back along the adjoint of the map described above to get elements $b_i\in H^{2i}(Wh_\F(BG);\Reals).$ There also are maps $Wh'_\F(BG(N))\to Wh_\F(BG)$ and weak equivalences $Wh'_\F(BG(N))\to Wh_\F(BG(N))$ (the corresponding diagrams commute on the nose) and we can pull back $b_i$ further and use the weak equivalence to get an element $b_i\in H^{2i}(Wh_\F(BG(N));\Reals).$ Notice that these elements fit together well with respect to the inclusions $BG(N)\to BG(N+1).$

\begin{Definition} We define the higher twisted smooth torsion of the bundle $p:E\to B$ with coefficients $\F$ as the collection of cohomology classes
	$$\tau_i(E;\F):=\tau_\F(p)^*\phi^*_{Wh}(b_i)\in H^{2i}(B;\Reals).$$
\end{Definition}

If $N$ is large enough ($N>>\dim B$) this is well defined. 

\begin{Remark}\label{independence} The cohomology class $\tau_i(E;\F)$ does not depend on the cofibrant replacements $\tilde Q(BG(N)_+)'\to \tilde Q(BG(N)_+)$: let $\tilde Q(BG(N)_+)''\to \tilde Q(BG(N)_+)$ be another cofibrant replacement. Then we can form the pull back
	$$\tilde Q(BG(N)_+)''':=\tilde Q(BG(N)_+)'\times_{Q(BG(N)_+)}\tilde Q(BG(N)_+)''.$$
All constructions done behave naturally with respect to this pull-back (possibly combined with another cofibrant replacement) and hence the Borel classes on $\tilde Q(BG(N)_+)'$ and $\tilde Q(BG(N)_+)''$ pull back to a common class on $\tilde Q(BG(N)_+)'''$ and hence induce the same class on $|\sing B|.$
\end{Remark}

\section{Axiomatic torsion}

We will prove that the cohomological higher smooth twisted torsion defined above satisfies the the axioms introduced by Igusa and the author (\cite{Igusa1} and \cite{Axioms}).

\subsection{Geometric Additivity}

In this section we will follow \cite{zbMATH05848700} and show that twisted smooth higher torsion satisfies Igusa's additivity axiom. Consider the following: let $p:E\to B$ be a bundle of compact manifolds and $\F$ a complex local system on $E.$ Assume further that we have bundles $p_1:E_1\to B$ and $p_2:E_2\to B$ so that $E$ is the vertical union $E=E_1\cup E_2$ and they meet along the common vertical boundary $E_1\cap E_2=E_0$ forming a bundle $p_0:E_0\to B.$ We can pull back the local system $\F$ on $E$ along the inclusions to all the $E_i.$ By abuse of notation, we will call the resulting systems again $\F.$ We need to assume that $E_1$ and $E_2$ are unipotent with respect to the local system $\F,$ for $E_0$ this will be guaranteed:

\begin{Lemma} In the situation above, we get that $H_*(F_0;\F)$ is a unipotent $\pi_1 B$-module, where $F_0$ is the fiber of $E_0\to B.$
\end{Lemma}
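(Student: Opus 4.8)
The plan is to deduce the unipotence of $H_*(F_0;\F)$ as a $\pi_1 B$-module from the unipotence of $H_*(F_1;\F)$ and $H_*(F_2;\F)$ by means of the Mayer--Vietoris sequence of the fiberwise decomposition $F = F_1 \cup_{F_0} F_2$. The point is that the whole Mayer--Vietoris sequence
$$\cdots \to H_*(F_0;\F) \to H_*(F_1;\F)\oplus H_*(F_2;\F) \to H_*(F;\F) \to H_{*-1}(F_0;\F) \to \cdots$$
is a sequence of $\pi_1 B$-modules and $\pi_1 B$-equivariant maps, since the decomposition $E = E_1 \cup_{E_0} E_2$ is a decomposition over $B$ and hence is respected by parallel transport around loops in $B$. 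I would first record this equivariance carefully: a loop $\gamma$ in $B$ induces a self-homeomorphism of $F$ (up to isotopy) carrying $F_i$ to $F_i$ and $F_0$ to $F_0$, and naturality of Mayer--Vietoris then gives that every map in the long exact sequence commutes with the $\gamma$-action.

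Next I would observe that the class of unipotent $\pi_1 B$-modules (finitely generated, or finite-dimensional over $\C$, as appropriate here) is closed under the operations that appear: submodules, quotient modules, and extensions, hence also finite direct sums. Actually the cleanest route is: unipotent modules are exactly those admitting a finite filtration with trivial subquotients, equivalently (since we are over a field and can think of the $\pi_1 B$-action as commuting unipotent operators after passing to the image in $\Aut$) those on which the augmentation ideal acts nilpotently; this class is a Serre subcategory of the category of $\C[\pi_1 B]$-modules. Given this, from the short exact sequence extracted from the Mayer--Vietoris sequence,
$$0 \to \coker\bigl(H_{k+1}(F_1)\oplus H_{k+1}(F_2) \to H_{k+1}(F)\bigr) \to H_k(F_0;\F) \to \ker\bigl(H_k(F_1)\oplus H_k(F_2) \to H_k(F)\bigr) \to 0,$$
one sees $H_k(F_0;\F)$ is an extension of a submodule of $H_k(F_1;\F)\oplus H_k(F_2;\F)$ (which is unipotent by hypothesis and closure under sums and submodules) by a quotient of $H_{k+1}(F;\F)$. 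So it remains to know $H_*(F;\F)$ is unipotent.

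The one genuine gap is that the hypothesis as stated assumes unipotence for $E_1$ and $E_2$ but not a priori for $E = E_1 \cup E_2$; however $H_*(F;\F)$ also sits in the same Mayer--Vietoris sequence as a subquotient-built object out of $H_*(F_1;\F)$, $H_*(F_2;\F)$ and $H_*(F_0;\F)$, which looks circular. I would break the circularity the same way: $H_k(F;\F)$ is an extension of $\ker\bigl(H_{k-1}(F_0) \to H_{k-1}(F_1)\oplus H_{k-1}(F_2)\bigr)$ by $\coker\bigl(H_k(F_0)\to H_k(F_1)\oplus H_k(F_2)\bigr)$, so $H_*(F;\F)$ is unipotent as soon as $H_*(F_0;\F)$ is — but that is exactly what we want to prove. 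The way out is to run an induction on homological degree using the fact that for $* $ above the fiber dimension all groups vanish, together with the observation that $H_*(F_i;\F)$ unipotent for $i=1,2$ already forces, via the portion $H_{k+1}(F_1)\oplus H_{k+1}(F_2) \to H_{k+1}(F) \to H_k(F_0) \to H_k(F_1)\oplus H_k(F_2)$, that $H_k(F_0;\F)$ is unipotent provided $H_{k+1}(F;\F)$ is; and $H_{k+1}(F;\F)$ in turn is unipotent provided $H_{k+1}(F_0;\F)$ is (plus unipotence of the $H_*(F_i)$). Starting from the top degree, where everything vanishes and is trivially unipotent, the two interleaved statements ``$H_k(F;\F)$ unipotent'' and ``$H_k(F_0;\F)$ unipotent'' propagate downward together. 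I expect this bookkeeping of the induction — making precise that the decreasing induction on degree closes up — to be the main obstacle; the equivariance of Mayer--Vietoris and the Serre-subcategory closure properties are routine once stated. It would be cleanest to phrase the final argument as: the full graded module $H_*(F;\F) \oplus H_*(F_0;\F)$ carries a finite filtration (built degree by degree from the Mayer--Vietoris pieces and the given filtrations on $H_*(F_1;\F), H_*(F_2;\F)$) with trivial subquotients, and in particular its summand $H_*(F_0;\F)$ does.
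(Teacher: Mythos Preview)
Your Mayer--Vietoris induction does not close up. The step you need is that unipotence propagates \emph{downward} in degree, but the exact sequence only couples $H_k(F_0;\F)$ with $H_{k+1}(F;\F)$: from
\[
H_{k+1}(F_1)\oplus H_{k+1}(F_2)\to H_{k+1}(F)\to H_k(F_0)\to H_k(F_1)\oplus H_k(F_2)
\]
one sees that, given unipotence of the outer terms, the two middle terms are unipotent \emph{simultaneously or not at all}. Your claim ``$H_{k+1}(F;\F)$ is unipotent provided $H_{k+1}(F_0;\F)$ is'' is the broken link: the portion of the sequence controlling $H_{k+1}(F)$ is
\[
H_{k+1}(F_1)\oplus H_{k+1}(F_2)\to H_{k+1}(F)\to H_k(F_0),
\]
so what you actually need is $H_k(F_0)$, not $H_{k+1}(F_0)$. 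Thus knowing everything in degrees $\geq k+1$ never lets you descend to degree $k$. A purely homotopical model makes the obstruction visible: if $F_1=F_2=CF_0$ are cones, then $H_*(F_1)=H_*(F_2)$ is trivial (hence unipotent) while $H_*(F_0)$ is arbitrary; Mayer--Vietoris alone cannot force $H_*(F_0;\F)$ to be unipotent.

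What is missing is the manifold input. The paper uses Poincar\'e--Lefschetz duality for the compact manifold $F_1$ with boundary $\partial F_1=F_0$: it gives a $\pi_1B$-equivariant isomorphism $H_*(F_1;\F)\cong H^{n-*}(F_1,F_0;\F)$, so unipotence of $H_*(F_1;\F)$ yields (after dualizing over $\C$) unipotence of $H_*(F_1,F_0;\F)$. Then the long exact sequence of the pair $(F_1,F_0)$, together with the Serre-subcategory property you correctly identified, gives unipotence of $H_*(F_0;\F)$ directly --- no induction and no use of $E_2$ or $E$ at all. If you want to salvage your write-up, replace the Mayer--Vietoris argument by this duality step.
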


\begin{proof} Notice that by definition $E_0$ is the vertical boundary of $E_1$ and hence by Poincare duality we get that $H_*(F;\F)\cong H^*(F_1,F_0; \F).$ Since the former is unipotent we get by duality that $H_*(F_1,F_0;\F)$ is unipotent as well. Now the long exact sequence in homology for the pair $(F_1,F_0)$ gives that $H_*(F_0;\F)$ is unipotent as unipotent modules form a Serre-category.
\end{proof} 

\begin{Theorem}[Geometric Additivity] Let $E=E_1\cup E_2$ be a smooth bundle with unipotent local system $\F\to E$ that is the vertical union of the unipotent bundles $E_1$ and $E_2$ (with local systems induced by $\F$) meeting along their common vertical boundary $E_0.$ Then we have
	$$\tau_i(E;\F)=\tau_i(E_1;\F)+\tau_i(E_2;\F)-\tau_i(E_0;\F)\in H^{2i}(B;\Reals).$$
\end{Theorem}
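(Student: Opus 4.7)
The plan is to prove the claimed additivity at the level of torsion maps into the Whitehead space $Wh_\F(BG(N))$, and then deduce the cohomological statement by pulling back the Borel classes $b_i$. More precisely, I would produce a preferred homotopy in $Wh_\F(BG(N))$ of the form
$$
\phi_{Wh} \circ \tau_\F(p) \;\simeq\; \phi_{1,Wh} \circ \tau_\F(p_1) + \phi_{2,Wh} \circ \tau_\F(p_2) - \phi_{0,Wh} \circ \tau_\F(p_0),
$$
where $\phi_j : E_j \to BG(N)$ are the restrictions of $\phi$ and the sums are taken in the H-space structure. Because the $b_i$ originate in $H^*(K(\C);\Reals)$ on an infinite loop space, they pull back to primitive classes on $Wh_\F(BG)$, so this one homotopy yields the required identity after applying $b_i^*$.

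First, I would establish the additivity of the transfer itself, essentially repeating the argument from \cite{zbMATH05848700}. Recall that $p^!$ is constructed from tangential tubes: a simplex $\sigma : \Delta^n \to B$ is sent to a partition of $E \times \Delta^n \times I$ obtained by thickening $\sigma^*TB_\epsilon$ and lifting fiberwise through $p$. Using the vertical decomposition $E = E_1 \cup_{E_0} E_2$, one can arrange the smoothing data near $E_0$ so that each such partition decomposes along the vertical hypersurface containing $E_0$ into a partition coming from $E_1$ glued (along $E_0$) to one coming from $E_2$. After stabilization, the partial monoid structure on $\tilde Q(E_+)$ yields a preferred homotopy
$$
p^! \;\simeq\; (\iota_1)_* p_1^! + (\iota_2)_* p_2^! - (\iota_0)_* p_0^!,
$$
where $\iota_j : E_j \hookrightarrow E$ are the inclusions.

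The main obstacle is to upgrade this from $\tilde Q$ to $Wh_\F$, which requires showing that the contracting homotopies $\omega_\F$ for the four bundles $p, p_1, p_2, p_0$ can be chosen compatibly. Under the linearization $\lambda_\F$, the decomposition of partitions above corresponds fiberwise to the Mayer--Vietoris short exact sequence of chain complexes
$$
0 \to C_*(F_0;\F) \to C_*(F_1;\F) \oplus C_*(F_2;\F) \to C_*(F;\F) \to 0,
$$
parametrized over $|\sing B|$, and Waldhausen's additivity theorem then supplies a preferred homotopy in $K(\C)$ between $c_\F$ and the alternating sum $c_{\F,1} + c_{\F,2} - c_{\F,0}$. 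The unipotence hypothesis (which passes to $E_0$ by the preceding lemma) allows the proof of Theorem \ref{contraction} to be run using filtrations of $H_*(F_j;\F)$ compatible with the Mayer--Vietoris long exact sequence, so that the individual $\omega_\F$-homotopies glue into a coherent contraction matching the decomposition of the transfers. Establishing this coherence at the chain level, rather than merely up to non-canonical homotopy, is the technical heart of the argument.

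Combining these two homotopies produces the desired additivity in $Wh_\F(BG(N))$; passing to $Wh_\F(BG)$ through the cofibrant replacements of Remark \ref{independence} and applying $b_i^*$ yields the claimed identity in $H^{2i}(B;\Reals)$.
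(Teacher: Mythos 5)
Your overall strategy matches the paper's: decompose the transfer, lift the contracting homotopy, and push through $Wh$. But there are two places where the proposal waves past exactly what the paper has to work at.

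First, the claim that ``the partial monoid structure on $\tilde Q(E_+)$ yields a preferred homotopy $p^! \simeq (\iota_1)_* p_1^! + (\iota_2)_* p_2^! - (\iota_0)_* p_0^!$'' cannot be right as stated: the partial monoid only lets you add disjoint partitions, not subtract. The minus sign in front of $\tau_i(E_0;\F)$ has to be manufactured, and the paper does so by a distinct device. It takes a bicollar $E_0\times[-1,1]\into E$, forms the closed complements $E_1',E_2'$ in $E_1,E_2$, and observes that for each simplex $\sigma$ there is a genuine cofibration sequence of retractive spaces
$E\cof E\sqcup(\sigma^*E_1'\sqcup\sigma^*E_2')\cof E\sqcup\sigma^*E.$
Waldhausen's additivity theorem then gives $p^A\simeq j'_{1*}q_1^A+j'_{2*}q_2^A+g^A$ where $g^A$ is induced by the second arrow, and the minus sign comes from identifying the cofiber $\sigma^*E/(\sigma^*E_1'\sqcup\sigma^*E_2')$ with the \emph{suspension} of the cofiber of $\overline F_{p_0^A}(\sigma)$ (the collar deformation-retracts to $\Sigma(\sigma^*E_0)_+$), so that $g^A$ has the homotopy type of $-j_{0*}p_0^A$. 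Your picture of a partition ``decomposing along the vertical hypersurface containing $E_0$'' misses both that the two pieces overlap along $E_0$ (hence the need for a correction term at all) and that the correction term is realized as a suspension, which is where the sign comes from.

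Second, the Mayer--Vietoris short exact sequence you invoke is not literally exact at the level of singular chains (you would need $\mathcal U$-small chains), and more importantly, it is not the sequence the paper actually linearizes. Instead the paper pushes the cofibration of retractive spaces through $\lambda_\F$: the cofiber becomes $C_*(\sigma^*E,\sigma^*E_1'\sqcup\sigma^*E_2';\F)$, this is recognized as the suspension of $C_*(\sigma^*E_0;\F)$, the pair $(F,F_1'\sqcup F_2')$ has unipotent homology by the long exact sequence of pairs, and the contracting homotopy $\omega_g$ for the $g^K$-term is obtained by applying the suspension functor to $\omega_{p_0}$, which is precisely what makes it compatible with the contracting homotopy already constructed for $p_0$. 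Your phrase ``filtrations compatible with the Mayer--Vietoris long exact sequence'' gestures at the right phenomenon, but the concrete mechanism in the paper is the suspension of $\omega_{p_0}$, not a coherent choice of filtrations across the M--V sequence, and until you specify something of that kind the ``technical heart'' you flag remains a genuine gap rather than a routine verification.

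Finally, a small organizational remark: the paper proves the homotopy at the level of $Wh_\F(E)$ using the H-space structure there and only afterwards composes with $\phi_{Wh}$; proving it directly in $Wh_\F(BG(N))$ as you propose works too, but is not needed and muddies the statement that the universal Borel classes are primitive.
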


It is well known that $K(\C)$ is an H-space and \cite{zbMATH05551153} gives an explicit H-space structure on $\tilde Q(E_+)$ and shows that the linearization map $\lambda_\F$ is an H-space map. Hence the fiber $Wh_\F(E)$ also is an H-space and the additivity follows directly from the following theorem:

\begin{Theorem} Let $p:E\to B$ be a bundle with a vertical splitting $E=E_1\cup_{E_0} E_2$ as above. Let $j_{i*}:Wh_\F(E_i)\to Wh_\F(E)$ be the maps induced by the inclusions $j_i:E_i\into E.$ Then there exists a homotopy between maps $|\sing B|\to Wh_\F(E)$
	$$\tau_\F(p)\simeq j_{1*}\tau_\F(p_1)+j_{2*}\tau_\F(p_2)+\bar g^{Wh_\F},$$
where $\bar g^{Wh_\F}$ is a map representing the homotopy type of $-j_{0*}\tau_\F(p_0).$
\end{Theorem}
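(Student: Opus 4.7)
The plan is to prove this geometric-additivity theorem by adapting the untwisted argument of \cite{zbMATH05848700}, threading the twist through the linearization $\lambda_\F$ and the contracting homotopy $\omega_\F$ of Theorem~\ref{contraction}. The proof splits naturally into three phases: (i) additivity of the Becker--Gottlieb transfer in $\tilde Q(E_+)$, (ii) compatibility of the contracting homotopies $\omega_\F$ via Mayer--Vietoris on fiber homology, and (iii) assembling (i) and (ii) inside the homotopy fiber $Wh_\F(E)$.

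First I would establish geometric additivity for the transfer $p^!:|\sing B|\to\tilde Q(E_+)$. For a smooth simplex $\sigma:\Delta^n\to B$, the partition $p^!(\sigma)$ arises from a tubular neighborhood in $B\times\Delta^n\times I$ pulled back along $p\times\id$ and smoothed near the boundary. Using a collar of the vertical boundary $E_0\subset E$, I would choose the neighborhoods so that $p^!(\sigma)$ decomposes as the union of $j_{1*}p_1^!(\sigma)$ and $j_{2*}p_2^!(\sigma)$, glued along $j_{0*}p_0^!(\sigma)$. The partition inclusions $j_{0*}p_0^!\subset j_{i*}p_i^!$ (for $i=1,2$) and the resulting cofiber decomposition of $p^!$ let us apply Waldhausen's additivity theorem to the partition $S_\bullet$-construction, producing a preferred homotopy
$$
p^!\;\simeq\; j_{1*}p_1^!\,+\,j_{2*}p_2^!\,-\,j_{0*}p_0^!\quad\text{in }\tilde Q(E_+),
$$
where $+$ and $-$ refer to the H-space structure on $\tilde Q(E_+)$ from \cite{zbMATH05551153}.

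Second, I would compare the four contracting homotopies $\omega_\F$ for $E,E_1,E_2,E_0$. The assumed unipotence of $H_*(F_i;\F)$ for $i=1,2$ together with the Lemma above implies unipotence of $H_*(F_0;\F)$, and $H_*(F;\F)$ is unipotent by hypothesis. The Mayer--Vietoris long exact sequence
$$
\cdots\to H_*(F_0;\F)\to H_*(F_1;\F)\oplus H_*(F_2;\F)\to H_*(F;\F)\to\cdots
$$
is $\pi_1B$-equivariant, so one can choose $\pi_1 B$-stable unipotent filtrations of the four fiber homologies that respect this sequence. A second application of Waldhausen's additivity theorem to these filtrations gives a preferred homotopy between $\omega_\F$ on $E$ and the corresponding signed sum of $\omega_\F$'s on $E_1,E_2,E_0$; since $\lambda_\F$ is an H-space map, this homotopy lives in $K(\C)$ compatibly with the transfer additivity above.

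Combining (i) and (ii) inside $Wh_\F(E)=\hofib(\lambda_\F)_0$ yields $\tau_\F(p)\simeq j_{1*}\tau_\F(p_1)+j_{2*}\tau_\F(p_2)-j_{0*}\tau_\F(p_0)$, and $\bar g^{Wh_\F}$ is produced by post-composing $j_{0*}\tau_\F(p_0)$ with the H-space inverse on $Wh_\F(E)$ (which exists because $\tilde Q(E_+)$ is an infinite-loop space and the H-structure descends to the fiber of the H-map $\lambda_\F$). I expect the main obstacle to be the coherence of the filtration choices in (ii): matching up the unipotent filtrations across the Mayer--Vietoris sequence in a way that is simultaneously natural under path-lifting of fibers and compatible with the inclusions $j_{i*}$ requires careful bookkeeping. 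The argument should nonetheless parallel the untwisted case, because $\pi_1 B$ acts by filtration-preserving automorphisms on all four fiber homologies and the additivity theorem is strict enough to absorb the remaining coherences.
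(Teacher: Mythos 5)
Your high-level outline — additivity of the transfer in $\tilde Q(E_+)$, comparison of the contracting homotopies $\omega_\F$, and assembly in the homotopy fiber — matches the skeleton of the paper's proof, but the technical core of both Phase (i) and Phase (ii) is different from (and weaker than) what the paper actually does, and this matters.

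In Phase (i), you propose decomposing $p^!(\sigma)$ as a \emph{union} of $j_{1*}p_1^!(\sigma)$ and $j_{2*}p_2^!(\sigma)$ glued along $j_{0*}p_0^!(\sigma)$ and feeding the partition inclusions $j_{0*}p_0^!\subset j_{i*}p_i^!$ into Waldhausen additivity. But Waldhausen's additivity theorem operates on cofibration sequences with honest quotients, not on pushout squares, so gluing along $p_0^!$ does not directly give you a cofibration sequence to feed in. The paper avoids this by choosing a bicollar $E_0\times[-1,1]\subset E$ and working with the \emph{disjoint} closed complements $E_1',E_2'$ of that bicollar. Then for each simplex $\sigma$ one has a genuine cofibration sequence of retractive spaces
$$
E\ \cof\ E\sqcup(\sigma^*E'_1\sqcup\sigma^*E'_2)\ \cof\ E\sqcup\sigma^*E,
$$
and the third term (the cofiber) is an actual functor $G^A$; Waldhausen additivity gives $p^A\simeq j'_{1*}q_1^A+j'_{2*}q_2^A+g^A$ where $g^A$ is the map induced by $G^A$. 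The sign is not introduced by an H-space inverse: $g^A$ is a genuine map, and it has the homotopy type of $-j_{0*}p_0^A$ because the cofiber of $G^A(\sigma)$ is the \emph{suspension} of the cofiber of $\overline{F}_{p_0^A}(\sigma)$. Your formula $p^!\simeq j_{1*}p_1^!+j_{2*}p_2^!-j_{0*}p_0^!$ is therefore stated too cleanly; the paper only asserts it up to replacing the last term by a concrete cofiber map $g^Q$ that \emph{represents} that homotopy type.

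In Phase (ii), your plan to choose $\pi_1B$-equivariant unipotent filtrations on $H_*(F;\F)$, $H_*(F_1;\F)$, $H_*(F_2;\F)$, $H_*(F_0;\F)$ compatible with the Mayer--Vietoris long exact sequence and then apply Waldhausen additivity is exactly where the proof would get stuck, and you flag this yourself as the ``main obstacle.'' The long exact sequence is not a cofibration sequence in $w\P Ch^{fd}(\C)$, so there is no single additivity application available, and matching four filtrations across an LES in a coherent, choice-free way is genuinely hard. The paper sidesteps all of this: the functor $g^K=\lambda^A_\F\circ g^A$ sends $\sigma$ to the single relative chain complex $C_*(\sigma^*E,\sigma^*E'_1\sqcup\sigma^*E'_2;\F)$. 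Its fiberwise homology $H_*(F,F_1'\sqcup F_2';\F)$ is unipotent by the long exact sequence of the pair, so the contraction machinery of Theorem~\ref{contraction} applies directly, producing a contracting homotopy $\omega_g$ to the constant point $*_{H_*(F,F'_1\sqcup F'_2;\F)}$; and because $\omega_g$ is obtained from $\omega_{p_0}$ by applying the suspension functor on chain complexes, the resulting $\bar g^{Wh^\F}$ automatically has the homotopy type of $-j_{0*}\tau_\F(p_0)$. There is no Mayer--Vietoris filtration matching to do. If you want to salvage your approach, the fix is to replace your MV argument by this ``pass to the pair $(F,F'_1\sqcup F'_2)$ and use suspension'' argument; as written, Phase (ii) contains an unfilled coherence gap.
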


This is essentially proved in \cite{zbMATH05848700}, we will highlight the important steps. First recall that we defined $A(X):=\Omega(|w\T_\bullet\R^{fd}(X)|/|w\T_0\R^{fd}(X))$ where the categories $\T_n\R^{fd}(X)$ have as objects the sequences of cofibrations of retractive spaces over $X$ 
	$$Y_0\cof Y_1\cof\cdots\cof Y_n.$$
So given a manifold bundle $p:E\to B$ we can define a map $p^A:|\sing B|\to A(E)$ induced by the functor $\sing B\to \T_1 \R^{fd}(E)$ given by sending a simplex $\sigma:\Delta^k\to B$ to the cofibration $E\to E\sqcup \sigma^*E.$ By inspection, one can see that the diagram
	$$\xymatrix{
		&	\tilde Q(E_+)\ar[d]\\
		|\sing B|\ar[ur]^{p^!}\ar[r]_{p^A}	&	A(E)}
	$$
commutes. Keeping this notation in mind, we will prove:

\begin{Proposition} Let $E\to B$ be a manifold bundle with a local system $\F$ on $E$ and a vertical splitting as before $E=E_1\cup_{E_0} E_2.$ The inclusions $j_i:E_i\to E$ also induce maps $j_{i*}:\tilde Q((E_i)_+)\to \tilde Q(E_+)$ and $j_{i*}:A(E_i)\to A(E).$ Then we have
	\begin{enumerate}\item There exists a preferred homotopy 
		$$\gamma^A:|\sing B|\times I\to A(E)$$
	between $p^A$ and $j_{1*}p_1^A+j_{2*}p_2^A+g^A,$ where $g^A$ is a map of the homotopy type $-j_{0*}p_0^A.$
		
\item There exists a preferred homotopy 
		$$\gamma^Q:|\sing B|\times I\to \tilde Q(E_+)$$
	between $p^!$ and $j_{1*}p_1^!+j_{2*}p_2^!+g^Q,$ where $g^Q$ is a map of the homotopy type $-j_{0*}p_0^!.$
\end{enumerate}
Furthermore, these constructions fit together well.
\end{Proposition}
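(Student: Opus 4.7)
The plan is to apply Waldhausen's additivity theorem to a functorial filtration of retractive spaces (respectively partitions) coming from the vertical splitting, exactly as in \cite{zbMATH05848700}, and then to absorb the resulting correction term by a second application of additivity combined with a pushout identification.

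For part (i), I would first construct a functor $\sing B \to \T_2 \R^{fd}(E)$ sending $\sigma : \Delta^k \to B$ to the pair of cofibrations of retractive spaces over $E$,
$$E \cof E \sqcup \sigma^*E_1 \cof E \sqcup \sigma^*E,$$
where the second cofibration uses the pushout description $\sigma^*E = \sigma^*E_1 \cup_{\sigma^*E_0} \sigma^*E_2$. Waldhausen's additivity theorem --- equivalently, the homotopy equivalence $(d_0, d_2) : |w\T_2 \R^{fd}(E)| \simeq |w\T_1 \R^{fd}(E)|^2$ together with the relation $d_1 \simeq d_0 + d_2$ under the resulting H-space structure --- then delivers a preferred homotopy
$$p^A \simeq j_{1*} p_1^A + c^A,$$
where $c^A$ comes from the functor $\sigma \mapsto (E \sqcup \sigma^*E_1 \cof E \sqcup \sigma^*E)$. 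A parallel functor $\sigma \mapsto (E \cof E \sqcup \sigma^*E_0 \cof E \sqcup \sigma^*E_2)$ yields by the same argument $j_{2*} p_2^A \simeq j_{0*} p_0^A + c'^A$, with $c'^A$ given by $\sigma \mapsto (E \sqcup \sigma^*E_0 \cof E \sqcup \sigma^*E_2)$. To identify $c^A$ with $c'^A$ I would use the evident pushout square of cofibrations with corners $E \sqcup \sigma^*E_0$, $E \sqcup \sigma^*E_1$, $E \sqcup \sigma^*E_2$, $E \sqcup \sigma^*E$ (naturally a functor $\sing B \to \T_1 \T_1 \R^{fd}(E)$), whose two face projections to $\T_1 \R^{fd}(E)$ realize $c^A$ and $c'^A$; Waldhausen's gluing lemma then provides a preferred homotopy $c^A \simeq c'^A$. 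Concatenating these three homotopies gives $p^A \simeq j_{1*} p_1^A + j_{2*} p_2^A + g^A$ with $g^A \simeq -j_{0*} p_0^A$, which is the sought $\gamma^A$.

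For part (ii) I would mirror the entire argument inside $\T_\bullet \P_\bullet$. The transfer $p^!(\sigma)$ is represented by a partition of $E \times \Delta^k \times I$ arising from the exponentiated tubular neighborhood of $\sigma^*E$; the vertical splitting gives rise, after enough stabilizing intervals to guarantee disjointness of nontrivial pieces, to nested partitions realizing the tubular neighborhoods of $\sigma^*E_0 \subset \sigma^*E_1 \subset \sigma^*E$ inside $E \times \Delta^k \times I^N$. This furnishes the required functor $\sing B \to \T_2 \P_\bullet(E \times I^N)$, and the same two applications of additivity --- together with the pushout identification, which at the partition level is a literal geometric cut-and-paste and therefore transparent --- produce $\gamma^Q$. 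The final clause, that $\gamma^A$ and $\gamma^Q$ fit together well, reduces to naturality of the assembly $\alpha : \tilde Q(E_+) \to A_p(E)$: every partition in the partition-level construction, viewed as a retractive space, maps under $\alpha$ to the corresponding retractive space used above, so the homotopies at the $\P$-level project stepwise to the homotopies at the $\R^{fd}$-level.

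The main obstacle I expect to be geometric rather than homotopy-theoretic: arranging the tubular neighborhoods of $\sigma^*E_0 \subset \sigma^*E_1 \subset \sigma^*E$ to be simultaneously nested, smooth, and coherent as $\sigma$ varies across $\sing B$. This is precisely where the stabilization parameter $N$ is indispensable, providing the transverse room in $E \times I^N$ needed to deform the three tubes into prescribed nested position without collision and in a way functorial in $\sigma$. Once this geometric setup is in place (as handled in \cite{zbMATH05848700} for the untwisted case), the remainder is a clean --- if somewhat bookkeeping-heavy --- application of Waldhausen's additivity theorem plus the pushout/gluing identification, and the twisted input plays no role whatsoever at this stage since the local system $\F$ only enters further downstream through $\lambda_\F$.
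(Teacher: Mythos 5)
Your approach is genuinely different from the paper's. The paper does not run two applications of additivity against the original pieces $E_1,E_2$ and then reconcile the correction terms; instead it first replaces $E_1,E_2$ by the closed complements $E_1',E_2'$ of a bicollar neighborhood $E_0\times[-1,1]$ of $E_0$. Because $E_1'$ and $E_2'$ are now \emph{disjoint} in $E$, the sum $j'_{1*}q_1^A+j'_{2*}q_2^A$ is realized on the nose by the single functor $\sigma\mapsto(E\cof E\sqcup(\sigma^*E_1'\sqcup\sigma^*E_2'))$, and additivity is then applied exactly once to the 2-cofibration $E\cof E\sqcup(\sigma^*E_1'\sqcup\sigma^*E_2')\cof E\sqcup\sigma^*E$. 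The term $g^A$ is then the explicit functor $\sigma\mapsto(E\sqcup(\sigma^*E_1'\sqcup\sigma^*E_2')\cof E\sqcup\sigma^*E)$, whose cofiber is the (unreduced) suspension of $(\sigma^*E_0)_+$, giving the identification with $-j_{0*}p_0^A$.

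Two remarks on your version. First, your step identifying $c^A\simeq c'^A$ via ``Waldhausen's gluing lemma'' is not quite the right invocation: the gluing lemma concerns weak equivalences of pushouts, while the vertical maps of your square $E\sqcup\sigma^*E_0\to E\sqcup\sigma^*E_1$ and $E\sqcup\sigma^*E_2\to E\sqcup\sigma^*E$ are cofibrations but not weak equivalences, so the square does not yield a natural transformation through $w\T_1\R^{fd}(E)$. What the pushout square actually produces is a cofibration $c'^A(\sigma)\cof c^A(\sigma)$ in the Waldhausen category $\T_1\R^{fd}(E)$ whose cofiber is an \emph{identity} cofibration $(\sigma^*E_1/\sigma^*E_0)_+\cof(\sigma^*E_1/\sigma^*E_0)_+$; a further application of additivity, together with the fact that identity cofibrations give the trivial loop, yields the preferred homotopy you want. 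So the conclusion is salvageable but needs a third use of additivity rather than the gluing lemma.

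Second, and more consequentially: your three concatenated homotopies give $p^A\simeq j_{1*}p_1^A+j_{2*}p_2^A-j_{0*}p_0^A$, but the term $-j_{0*}p_0^A$ is only specified as an $H$-space inverse, not as an explicit map coming from a functor $\sing B\to w\T_1\R^{fd}(E)$. The paper's detour through the disjointified $E_i'$ buys exactly this: $g^A$ (and, in the passage to $\tilde Q$ and to the Theorem following the Proposition, the corresponding $g^Q$ and the chain-level $G^K_\F$) is given by a concrete functor whose cofiber is a suspension, and the downstream argument contracting $g^K$ to the constant map at $H_*(F,F_1'\sqcup F_2';\F)$ relies on having this explicit model. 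So while your route proves the stated homotopy-level claim in the Proposition, the paper's route is the one that supports the ``constructions fit together well'' clause and the subsequent Theorem. Your discussion of part (ii) and of the compatibility under the assembly $\alpha$ is otherwise in line with the paper.
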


The proof of this proposition and the theorem can both be found in \cite{zbMATH05848700} and are (with slight modifications) still applicable in our setting. We will recall the main points.

\begin{proof}[Proof of Proposition] This does not depend on the local system at all, and is therefore applicable without modification. We still recall it to establish some notation. We will only show \textit{i} as \textit{ii} is analogous.

First we take a bicollar neigborhood $E_0\times [-1,1]\to E$ of $E_0$ and we define $E_1'$ and $E_2'$ as the closed complements of this bicollar neighborhood in $E_1$ and $E_2.$ We denote the bundles $q_i:E'_i\to B$ and the inclusions $j'_{i*}:E'_i\to E.$  So it is enough to show
	$$p^A\simeq j'_{1*}q_1^A+j'_{2*}q_2^A+g^A.$$

By inspection it can be seen that the maps $j'_{i*}q_i^A:|\sing B|\to A(E)$ are induced by functors
	$$\overline F_{q_i^A}:\sing B\to w\T_1 \R^{fd}(E)$$
sending a simplex $\sigma:\Delta^n\to B$ to the cofibration $(E\to E\sqcup \sigma^*E'_i).$ Consequently the map $j'_{1*}q_1^A+j'_{2*}q_2^A$ comes from the functor
	$$\overline F_{q_1^A}\sqcup F_{q_i^A}:\sing B\to w\T_1 \R^{fd}(E)$$
sending a simplex $\sigma:\Delta^n\to B$ to the cofibration $(E\cof E\sqcup \sigma^*E'_1\sqcup \sigma^*E'_2).$ Now there is sequence of cofibrations
	$$E\cof E\sqcup (\sigma^*E'_1\sqcup \sigma^*E'_2)\cof E\sqcup \sigma^* E$$
which by Waldhausen's additivity theorem (details in \cite{zbMATH05848700}) induces a homotopy between the map $|\sing B|\to A(E)$ induced by the functor
	$$\sigma\mapsto (E\cof E\sqcup\sigma^*E)$$
and the sum of the maps induced by the functors
	$$\sigma\mapsto (E\cof E\sqcup (\sigma^*E'_1\sqcup \sigma^*E'_2))$$
and
	$$G^A:\sigma\mapsto (E\sqcup (\sigma^*E'_1\sqcup \sigma^*E'_2)\cof E\sqcup \sigma^*E).$$
The first map is none other than $p^A,$ the second is $j'_{1*}q_1^A+j'_{2*}q_2^A$ and the third we will call $g^A.$

Since the cofiber of the cofibration $G^A(\sigma)$ is isomorphic to the object of $\R^{fd}(E)$ obtained by applying the suspension functor to the cofiber of $\overline F_{p_0^A}(\sigma),$ we see that $g^A$ has the homotopy type of $-j_{0*}p_0^A.$
\end{proof}

\begin{proof}[Proof of the Theorem] Using the language established in the previous proof, it is enough to show that
	$$\tau_\F(p)\simeq j'_{1*}\tau_\F(q_1)+j'_{2*}\tau_\F(q_2)+\bar g^{Wh^\F}.$$
The map $\bar g^{Wh^\F}$ can be constructed as follows:

Let 
	$$G^K_\F:\sing B\to w\S_1Ch^{fd}(\Q)$$
be the functor with
	$$G^K_\F(\sigma):=C_*(\sigma^*E,\sigma^*E'_1\sqcup\sigma^*E'_2)$$
and let $g^k:|\sing B|\to A(E)$ be the induced map. By inspection, one can see $g^K=\lambda_\F^A\circ g^A.$ Since $\lambda^A_\F$ is a map of infinite loop spaces and $g^A$ has the homotopy type of $-j_{0*}p_0^A$ we get that $g^K$ has the homotopy type of $-\lambda_\F^Aj_{0*}p_0^A\simeq c_\F.$

If $F'_i$ denotes the fiber of $q_i$ and $F$ denotes the fiber of $p,$ one can construct a homotopy $\omega_g$ from $g^K$ to $*_{H_*(F, F'_1\sqcup F'_2;\F)}$ the same way we did earlier (the $\pi_1 B$-module $H_*(F, F_1'\sqcup F_2';\F)$ is unipotent by the long exact sequence in homology for pairs). Again by inspection one notices $g^K=\lambda_E g^Q$ and therefore the homotopy $\omega_g$ determines a lift
	$$g^{Wh^\F}:|\sing B|\to Wh^\F(E)_{H_*(F, F'_1\sqcup F'_2;\F)}.$$
We denote its reduction by $\bar g^{Wh^\F}:|\sing B|\to Wh^\F(E).$ Since $g^Q$ represents the homotopy type of $-j_{0*}p_0^!$ and that the contracting homotopy $\omega_g$ can be obtained by applying the suspension functor in the category of chain complexes to the construction of the algebraic contraction $\omega_{p_0},$ we see that $\bar g^{Wh^\F}$ has the homotopy type of $-j_0\tau_\F(p_0).$

The rest of the proof can be translated word by word from \cite{zbMATH05848700}.
\end{proof}

\subsection{Geometric Transfer}

Our goal is to show the geometric transfer axiom:

\begin{Theorem}[Geometric Transfer] Let $p:E\to B$ be a unipotent bundle with local system $\F\to E$  and let $q:D\to E$ a linear disc or sphere bundle with fiber $F.$ Then we have
	$$\tau(pq;q^*\F)=\chi(F)\tau(p;\F)+\tr_B^E(\tau(q;q^*\F))$$
where $\chi(F)$ denotes the Euler characteristic and $\tr_B^E$ is the Gottlieb-Becker transfer.
\end{Theorem}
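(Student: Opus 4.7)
My plan is to adapt the non-twisted proof of \cite{zbMATH05848700} to the twisted setting, following the same two-step template as the geometric additivity argument above. Since the linearization $\lambda_{q^*\F}$ is an H-map and both $\tilde Q(D_+)$ and $K(\C)$ are H-spaces, $Wh_{q^*\F}(D)$ inherits an H-space structure. It therefore suffices to produce, in $Wh_{q^*\F}(D)$, a homotopy of maps from $|\sing B|$ realizing the claimed linear combination; the cohomology statement then follows by pulling back the Borel class $b_i$ and using the H-space structure together with the naturality of $\phi_{Wh}$.

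First I would establish the geometric half of the argument purely in terms of transfer maps into $\tilde Q(D_+)$. The key input from \cite{zbMATH05848700} is a preferred homotopy
$$(pq)^! \;\simeq\; \chi(F)\cdot j_* p^! \;+\; \tr_B^E(q^!),$$
where $j:E\hookrightarrow D$ is the zero section, the sum is taken in the monoid structure on $\tilde Q(D_+)$, and $\tr_B^E(q^!)$ is the partition-level Becker--Gottlieb transfer of the $q$-transfer through $p$. At the level of parametrized partitions, this is obtained by splitting the tangential tube of $pq$ into $\chi(F)$ vertex-neighborhoods of a fiberwise triangulation of $F$, each contributing a copy of $j_*p^!$, plus a remaining tangential piece matching $\tr_B^E(q^!)$. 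This construction is blind to the local system and transcribes verbatim from \cite{zbMATH05848700}.

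The genuinely twisted content is step two: promoting the geometric homotopy through the linearization fiber. For this I must produce a compatible decomposition of the contracting homotopy $\omega_{q^*\F}$ used in the definition of $\tau(pq;q^*\F)$. The key observation is that, because $q$ is a linear bundle and $q^*\F$ is pulled back along $q$, the Serre spectral sequence of $pq$ exhibits $H_*(\text{fiber of }pq;\,q^*\F)$ as an iterated extension built from $H_*(F_b;\F)$ (on which $\pi_1 B$ acts unipotently by hypothesis on $p$) and $H_*(F;q^*\F|_{F_b})$ (on which $\pi_1 B$ acts trivially since the linear structure of $q$ is preserved under parallel transport). The resulting unipotent filtration on $H_*(\text{fiber of }pq;\,q^*\F)$ refines the filtrations used for $p$ and $q$; feeding it into the lemma preceding Theorem~\ref{contraction} and applying Waldhausen additivity along each Serre layer yields a preferred homotopy between $\omega_{q^*\F}$ and the corresponding combination of $\omega_\F$ for $p$ and $\omega_{q^*\F}$ for $q$.

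Combining the two steps in the H-space $Wh_{q^*\F}(D)$ yields a preferred homotopy
$$\tau_{q^*\F}(pq)\;\simeq\;\chi(F)\cdot j_*\tau_\F(p)\;+\;\tr_B^E(\tau_{q^*\F}(q))$$
of maps $|\sing B|\to Wh_{q^*\F}(D)$, and pulling back $b_i$ along this decomposition delivers the stated identity in $H^{2i}(B;\Reals)$. The main obstacle I anticipate is step two: producing a \emph{preferred} rather than merely abstractly existing homotopy requires that the unipotent filtration chosen for $pq$ refine coherently through the Serre filtration at the chain-complex level, so that the additivity theorem can be applied layerwise. This Serre-compatibility check is where essentially all the new work beyond the untwisted case lies.
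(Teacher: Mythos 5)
Your ``key input'' attributed to \cite{zbMATH05848700} --- a preferred homotopy
$$(pq)^! \;\simeq\; \chi(F)\cdot j_* p^! \;+\; \tr_B^E(q^!)$$
of maps $|\sing B|\to\tilde Q(D_+)$, obtained by splitting the tangential tube along a fiberwise triangulation of $F$ --- is not what that paper establishes, and I do not see how to produce it at the unstable partition level: the Euler characteristic of $F$ has no reason to appear \emph{before} linearization, and the Becker--Gottlieb transfer of a composite does not decompose additively in this form. What the paper (following \cite{zbMATH05848700}) actually does is reduce to \emph{Leray--Hirsch} bundles: disc bundles and even-dimensional sphere bundles are Leray--Hirsch, while odd sphere bundles are split into hemisphere disc bundles and handled by geometric additivity. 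For a Leray--Hirsch bundle $q:D\to E$ the Leray--Hirsch isomorphism on relative chain complexes supplies the preferred homotopy $\lambda_{q^*\F}\circ\tilde Q(q^!)\simeq(\otimes H_*(F))\circ\lambda_\F$ of Lemma~\ref{Transferlemma}, and the factor $\chi(F)$ enters only because $\otimes H_*(F):K(\C)\to K(\C)$ represents multiplication by $\chi(F)$ in the infinite loop space structure. One then shows $\tau_\F(pq)\simeq Wh^\F(q^!)\circ\tau_\F(p)$, and the transfer formula follows by tracking the Borel class through this composite.

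Your second step --- a Serre-filtration refinement of the unipotent filtration for $pq$, applied layerwise through Waldhausen additivity --- is likewise neither needed nor what is done: once Leray--Hirsch is in place, the contracting homotopy for $c_{q^*\F,pq}$ is obtained directly from the algebraic contraction of $c_{\F,p}$ tensored with $H_*(F)$ via the Leray--Hirsch isomorphism for the bundle pair $(q^*X,D)\to(X,E)$, and compatibility of the homotopies (which you correctly identify as the crux) is verified as in Lemma~\ref{compatibel}. If you want to pursue the geometric route, you would first have to establish your claimed unstable decomposition of $(pq)^!$ independently, and the proposed Serre-compatibility check would be substantial new work rather than a transcription from \cite{zbMATH05848700}; I expect the former to fail at the level of maps into $\tilde Q(D_+)$.
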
	

Again, we follow \cite{zbMATH05848700} closely as all their arguments work just as well with local coefficients. First we make the following definition:

\begin{Definition} Let $q:D\to E$ be a smooth bundle with fiber $F$ and a local system $\F\to E.$ Let $i:F\to D$ be the inclusion of the fiber over the basepoint. Notice that the local system $i^*q^*\F$ is trivial and hence there is a natural map $i^*:H^*(D;q^*\F)\to H^*(F)$ and we call $D\to E$ a Leray-Hirsch bundle if this map $i^*$ has a section $\theta:H^*(F)\to H^*(D;q^*\F)$ with $i^*\theta=\id.$
\end{Definition}

These bundles are exactly the bundles that satisfy the conditions of the Leray-Hirsch Theorem:

\begin{Theorem} Let $q:D\to E$ be a Leray-Hirsch bundle with local system $\F\to E.$ Then there exists an isomorphism
		$$\alpha(q):H_*(D;q^*\F)\to H_*(E;\F)\otimes H_*(F)$$
	and this is natural.
\end{Theorem}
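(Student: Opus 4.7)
The plan is to prove the theorem via the Leray--Serre spectral sequence for the bundle $q:D\to E$ with coefficients in $q^*\F$, using the section $\theta$ to identify the $E^2$-page as a tensor product and to force collapse at $E^2$. The resulting filtration of $H_*(D;q^*\F)$ will then be split by $\theta$, yielding the desired isomorphism.

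First, I would analyze the local coefficient system $\mathcal{H}_q$ on $E$ whose stalk at $e\in E$ is $H_q(F_e;q^*\F|_{F_e})$. Since $i^*q^*\F$ is trivial, each stalk is canonically $H_q(F)\otimes\F_e$. The key observation is that $\mathcal{H}_q\cong H_q(F)\otimes\F$ as local systems on $E$, where $H_q(F)$ is viewed as the constant system. The existence of $\theta$ forces this splitting: the classes $\theta(\gamma)\in H^*(D;q^*\F)$ for $\gamma$ ranging over a basis of $H^*(F)$ are globally defined on $D$ and restrict to the standard basis on every fiber, so parallel transport of $H^*(F)$ along loops in $E$ must be trivial. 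Dualizing gives triviality of the monodromy on $H_*(F)$ and hence the tensor splitting. This identifies
$$E^2_{p,q}=H_p(E;\mathcal{H}_q)\cong H_p(E;\F)\otimes H_q(F).$$

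Second, I would show that the spectral sequence collapses at $E^2$. Passing to the cohomological Serre spectral sequence with the same coefficients, each class $\theta(\gamma)\in H^q(D;q^*\F)$ represents a permanent cycle in $E_\infty^{0,q}$, since it restricts to a nonzero class on the fiber and hence survives every differential $d_r$ for $r\geq 2$. Using the $H^*(E)$-module structure on the cohomological spectral sequence (coming from pullback and cup product), the entire $E_2$-page is generated over $H^*(E)$ by these fiber classes, so all differentials must vanish. Dualizing back yields collapse of the homological spectral sequence at $E^2$.

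To extract the isomorphism $\alpha(q)$, I would invert the edge maps of the collapsed spectral sequence, realizing the inverse map $H_*(E;\F)\otimes H_*(F)\to H_*(D;q^*\F)$ explicitly via the cap product with the classes $\theta(\gamma)$. Naturality with respect to bundle maps over base changes follows from naturality of the Serre spectral sequence, naturality of the cap product, and the evident compatibility of pulled-back Leray--Hirsch sections. The main obstacle lies in the first step: one must carefully disentangle the monodromy of $\F$ from the monodromy of the fiber homology, and verify that the existence of $\theta$ forces the latter to be trivial. Once this tensor decomposition of $\mathcal{H}_q$ is in place, the remainder of the argument proceeds in parallel to the classical Leray--Hirsch theorem.
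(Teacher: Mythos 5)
The paper never actually proves this statement: it is presented as the Leray--Hirsch theorem with local coefficients, a known fact quoted without proof (the surrounding discussion simply uses it to derive the transfer formula). So there is no ``paper's proof'' to compare against, and your proposal should be judged on its own terms.

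Your Serre spectral sequence strategy is the standard route and the overall scaffolding (tensor decomposition of $\mathcal{H}_q$, collapse at $E^2$ via a module structure, extraction of $\alpha(q)$ from the collapsed filtration) is the right one. The problem is in the step you yourself flag as the main obstacle, and the argument you give there is not correct as written. You claim that the classes $\theta(\gamma)$ ``restrict to the standard basis on every fiber, so parallel transport of $H^*(F)$ along loops in $E$ must be trivial.'' But $\theta(\gamma)|_{F_e}$ is an element of $H^*(F_e;\F_e)\cong H^*(F_e)\otimes\F_e$, not of $H^*(F_e)$, and what being a global class gives you is that $\theta(\gamma)|_{F_\bullet}$ is a flat section of the system $\mathcal{H}^q$ --- i.e.\ that it is fixed by the \emph{combined} monodromy $\tau_\alpha^*\otimes\mu_\alpha$, where $\tau_\alpha$ is the fiber monodromy and $\mu_\alpha$ is the monodromy of $\F$. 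This alone does not decouple $\tau_\alpha^*$ from $\mu_\alpha$: a priori one could have $\tau_\alpha^* = \lambda^{-1}\mathrm{id}$ and $\mu_\alpha$ acting by $\lambda$ on the relevant vector. Without decoupling you do not get $\mathcal{H}_q\cong\F\otimes H_q(F)^{\mathrm{const}}$, which is exactly the shape of local system needed to identify $E^2_{p,q}$ with $H_p(E;\F)\otimes H_q(F)$; triviality of the combined system $\mathcal{H}_q$ is a genuinely different (and insufficient) statement.

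The gap is fillable, but it requires using the degree-zero part of the Leray--Hirsch hypothesis explicitly, which you do not invoke. The section $\theta(1)\in H^0(D;q^*\F)$ with $i^*\theta(1)=1$ supplies a nonzero flat vector $v_0\in\F_{b_0}$ (a $\pi_1(E)$-invariant element, since $H^0(F)$ has trivial fiber monodromy and the combined monodromy must fix $1\otimes v_0$, forcing $\mu_\alpha v_0 = v_0$). One should then interpret $i^*\theta(\gamma)=\gamma$ as $i^*\theta(\gamma)=\gamma\otimes v_0$. With $v_0$ flat, invariance of $\gamma\otimes v_0$ under $\tau_\alpha^*\otimes\mu_\alpha$ reduces to $\tau_\alpha^*\gamma\otimes v_0 = \gamma\otimes v_0$, whence $\tau_\alpha^*\gamma=\gamma$ since $v_0\ne 0$; this is what actually trivializes the fiber monodromy and yields your tensor splitting. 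A secondary issue worth flagging: in the collapse step your appeal to the $H^*(E)$-module structure via cup product needs care, since $q^*x\cup\theta(\gamma)$ lands in $H^*(D;q^*\F\otimes q^*\F)$ rather than $H^*(D;q^*\F)$ when both classes carry $q^*\F$ coefficients; one must instead pair against the untwisted cohomological Serre spectral sequence (with $\C$ coefficients), whose $E_2$ page also becomes $H^p(E)\otimes H^q(F)$ once the fiber monodromy is shown trivial, and feed the result back through the tensor decomposition. These are the two places where the proposal, as written, has genuine gaps rather than mere omissions.
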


Instead of proving additivity, we will prove the following slightly different Theorem:

\begin{Theorem} The tranfer formula holds for every Leray-Hirsch bundle $q:D\to E$ with fiber $F$ over a unipotent bundle $p:E\to B$ with local system $\F\to E.$
\end{Theorem}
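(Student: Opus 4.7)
The plan is to adapt the proof of \cite{zbMATH05848700} to twisted coefficients. The Leray--Hirsch hypothesis is precisely the input needed so that the chain-level splitting underlying their argument remains available with $\F$-coefficients, after which the proof proceeds essentially verbatim.

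First I would reduce the theorem to identities of cohomology classes in $H^*(B;\Reals)$ obtained by pulling back the Borel regulators $b_i$. Since $\lambda_{q^*\F}$ is a map of infinite loop spaces by \cite{zbMATH05551153}, the Whitehead space $Wh_{q^*\F}(D)$ is an H-space and the right-hand side of the transfer formula is well-defined. It then suffices to produce preferred homotopies between the relevant chain-level composites $|\sing B|\to K(\C)$ and lift them to the Whitehead space via the contracting homotopies of Theorem \ref{contraction}.

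Second, the key decomposition is applied to the chain-complex functor $\sigma\mapsto C_*(\sigma^*D;q^*\F)$ which represents $c_{q^*\F}\circ (pq)^A$. The Leray--Hirsch hypothesis gives a natural chain equivalence
\[ C_*(\sigma^*D;q^*\F)\simeq C_*(\sigma^*E;\F)\otimes_\C H_*(F) \]
for each simplex $\sigma$. Splitting $H_*(F)=\bigoplus_i H_i(F)$ and repeatedly applying Waldhausen's additivity theorem, the grading contributes the Euler-characteristic factor $\chi(F)=\sum(-1)^i\dim H_i(F)$, producing a homotopy of the form $c_{q^*\F}\circ (pq)^A \simeq \chi(F)\cdot c_\F\circ p^A + g^K$ at the K-theoretic level. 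The residual term $g^K$ is then identified with the image under $\lambda_{q^*\F}$ of the Becker--Gottlieb transfer applied to $q^A$ by unwinding the explicit model of the transfer from \S 2.2 fiberwise over $q$, exactly as in \cite{zbMATH05848700}.

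Third, I would lift these chain-level homotopies to the reduced Whitehead space using the contracting homotopies $\omega$ of Theorem \ref{contraction}. The unipotence required for this lift, namely of $q^*\F$ on $D$ with respect to the composite bundle $pq$, is automatic from the Leray--Hirsch decomposition: the fiber of $pq$ satisfies $H_*(F_{pq};q^*\F)\cong H_*(F_p;\F)\otimes H_*(F)$, which is unipotent for $\pi_1(B)$ since the first factor is and the second has trivial $\pi_1(B)$-action (guaranteed by the existence of the section $\theta$). The final assembly into a homotopy in the Whitehead space then proceeds as in the proof of geometric additivity above.

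The main obstacle will be producing the Leray--Hirsch chain decomposition as a \emph{natural} equivalence of functors $\sing B\to w\P Ch^{fd}(\C)$, rather than merely a pointwise chain equivalence on each simplex. The section $\theta:H^*(F)\to H^*(D;q^*\F)$ provides the necessary cohomological splitting, but realizing it at the chain level in a simplex-coherent fashion (so that Waldhausen's additivity applies to a \emph{sequence of functors}, not merely objectwise) requires some care. Once this naturality is established, the remainder is a direct transcription of the BDKW argument with local coefficients inserted.
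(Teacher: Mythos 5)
Your overall framing (reduce to chain-level homotopies via the Leray--Hirsch decomposition, then lift to the Whitehead space) is in the right spirit, but the central step -- producing a homotopy of the form $c_{q^*\F}\circ (pq)^A \simeq \chi(F)\cdot c_\F\circ p^A + g^K$ with a residual term $g^K$ to be identified with the Becker--Gottlieb transfer of $q^A$ -- does not work, and it is not what the proof does. The Leray--Hirsch chain equivalence $C_*(\sigma^*D;q^*\F)\simeq C_*(\sigma^*E;\F)\otimes H_*(F)$ together with Waldhausen additivity (splitting $H_*(F)$ by degree, with odd shifts contributing signs) gives a \emph{clean} identification $[C_*(\sigma^*D;q^*\F)]=\chi(F)\cdot[C_*(\sigma^*E;\F)]$ in $K(\C)$; there is no cofiber sequence here that could spawn a residual summand analogous to the $g^A$/$g^K$ terms in the geometric additivity proof. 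The transfer term $\tr_B^E(\tau(q;q^*\F))$ in the statement of the theorem simply does not live at this level -- it is not a K-theoretic residual.

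The paper's route is instead multiplicative and compositional rather than additive: one first proves (Lemma \ref{Transferlemma}) that $\lambda_{q^*\F}\circ \tilde Q(q^!)$ and $(\otimes H_*(F))\circ \lambda_\F$ agree up to preferred homotopy, promotes this to a map $Wh^\F(q^!):Wh^\F(E)\to Wh^\F(D)$, and then proves a commuting \emph{triangle} $\tau(pq)\simeq Wh^\F(q^!)\circ\tau_\F(p)$ (the Proposition). The additive transfer formula appears only at the end, as a cohomological identity obtained by pulling back the universal Borel class along $Wh^\F(q^!)$ and analyzing that map as in \cite{zbMATH05848700}; the $\chi(F)$ factor comes from the observation that $\otimes H_*(F)$ is multiplication by $\chi(F)$ on the infinite loop space level. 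So your closing remark that ``the final assembly ... proceeds as in the proof of geometric additivity'' is where the argument goes astray: the two proofs have genuinely different shapes, and the additivity template (cofibration sequence $\Rightarrow$ Waldhausen additivity $\Rightarrow$ residual term) cannot be transplanted here. Your concern about the \emph{naturality} of the Leray--Hirsch splitting, on the other hand, is well placed and is exactly what the Lemma handles by using the Leray--Hirsch isomorphism functorially on the bundle pairs $(q^*X,D)\to(D,E)$.
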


\begin{Remark} This is not strictly stronger than the transfer formula as an odd dimensional linear sphere bundle is not necessarily a Leray-Hirsch bundle. However, even dimensional sphere bundles are Leray-Hirsch bundles. If $S^{2n+1}(\xi)\to E$ is a linear sphere bundle we can split it in an upper and lower hemisphere bundle
	$$S^{2n+1}(\xi)=D^{2n+1}_+(\xi)\cup_{S^{2n}(\xi)}D^{2n+1}_-(\xi)$$
and an application of additivity yields geometric transfer.
\end{Remark}

We begin with the following Lemma.

\begin{Lemma}\label{Transferlemma} Let $q:D\to E$ be a Leray-Hirsch bundle with fiber $F$ and local system $\F\to E.$ Then the following diagram commutes up to a preferred homotopy
	$$\xymatrix{
		\tilde Q(E_+)\ar[r]^{\tilde Q(q^!)}\ar[d]_{\lambda_\F}	&	\tilde Q(D_+)\ar[d]^{\lambda_{q^*\F}}\\
		K(\C)\ar[r]_{\otimes H_*(F)}	&	K(\C).}
	$$
\end{Lemma}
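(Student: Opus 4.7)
The plan is to identify both sides of the diagram with maps induced by explicit functors on the partition category via Remark \ref{mapsinto}, reduce both sides to their homology-valued versions via the preceding lemma $k\simeq k\circ |H|,$ and finally use the Leray-Hirsch Theorem to exhibit a preferred natural transformation between them.

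First I would describe both compositions functorially. The clockwise composition $\lambda_{q^*\F}\circ \tilde Q(q^!)$ is induced by the functor $w\T_1\P_\bullet(E)\to w\P Ch^{fd}(\C)$ sending a partition $M$ over $E$ to $C_*(q^{-1}(M);q^*\F)$: indeed, $\tilde Q(q^!)$ pulls partitions back along $q$ and $\lambda_{q^*\F}$ takes the associated relative singular chain complex (cf.\ the Remark following the definition of the transfer). The counterclockwise composition $(-\otimes H_*(F))\circ \lambda_\F$ is induced by $M\mapsto C_*(M;\F)\otimes_\C H_*(F).$ The preceding lemma $k\simeq k\circ |H|$ then replaces each of these by its homology version with trivial differential, so it suffices to construct a preferred natural isomorphism
$$\alpha_M:H_*(q^{-1}(M);q^*\F)\xrightarrow{\cong}H_*(M;\F)\otimes H_*(F)$$
of functors $w\T_1\P_\bullet(E)\to w\P Ch^{fd}(\C).$

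Next I would invoke the Leray-Hirsch hypothesis. The restriction $q^{-1}(M)\to M$ is the pullback of the Leray-Hirsch bundle $q:D\to E$ along the inclusion $M\hookrightarrow E,$ and the pulled-back cohomology classes still split the fiber inclusion. Hence the Leray-Hirsch Theorem stated above, applied to the restricted bundle, supplies the isomorphism $\alpha_M$ tensor-factor by tensor-factor. Fixing a single section $\theta:H^*(F)\to H^*(D;q^*\F)$ once and for all, and pulling it back to every $q^{-1}(M),$ yields coherent sections for all the restricted bundles simultaneously; this makes $\alpha_M$ natural in $M.$ Passing to $K(\C),$ the resulting natural transformation between homology-valued functors produces, via Waldhausen additivity, the desired preferred homotopy.

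The main obstacle I expect is the naturality of $\alpha_M$ across morphisms of $w\T_1\P_\bullet(E)$ (in particular, across cofibrations $M\cof M'$ of partitions and the inclusions of partitions $M\subset M'$ that appear among the $1$-simplices of $\T_\bullet$), since the Leray-Hirsch decomposition is only canonical after a section $\theta$ has been chosen. The key point, as indicated above, is that a single fixed $\theta$ on $D$ restricts to a coherent family of splittings over all the $q^{-1}(M),$ forcing $\alpha_M$ to commute with the structure maps; once this is verified, the rest of the argument assembles routinely from the functorial descriptions of both compositions and the lemma of the previous subsection.
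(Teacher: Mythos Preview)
Your overall strategy---identify both compositions via functors to chain complexes, pass to homology, and invoke the Leray--Hirsch isomorphism---is exactly the paper's approach. The paper phrases it as: the upper-right composition corresponds to the functor sending a retractive space $X$ over $E$ to $C_*(q^*X,D;q^*\F)$, the lower-left to $X\mapsto C_*(X,E;\F)\otimes H_*(F)$, and the Leray--Hirsch isomorphism for the bundle pair $(q^*X,D)\to(X,E)$ supplies the preferred homotopy.

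There is, however, a genuine slip in your setup. You invoke Remark~\ref{mapsinto} and work at the level of partitions $w\T_1\P_\bullet(E)$, but that remark concerns maps \emph{into} $Q(X_+)$, not maps out of it. The linearization $\lambda_\F$ is by construction the composite $\tilde Q(E_+)\stackrel{\tilde\alpha}{\to}A(E)\stackrel{\lambda_\F^\R}{\to}K(\C)$, where only the second arrow is induced by an explicit functor on a Waldhausen category---namely $\R^{fd}(E)\to\P Ch^{fd}(\C)$. To produce a homotopy between maps out of $\tilde Q(E_+)$, you therefore want to compare exact functors $\R^{fd}(E)\to\P Ch^{fd}(\C)$, not functors on the partition category; a natural (quasi-)isomorphism at that level is what actually induces the homotopy on K-theory. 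Relatedly, the chain complexes should be the \emph{relative} ones $C_*(q^*X,D;q^*\F)$ and $C_*(X,E;\F)$: your absolute $C_*(M;\F)$ would assign a nontrivial complex to the trivial partition, contrary to how $\lambda_\F$ behaves at the basepoint. Once you move to retractive spaces and relative chains, the rest of your argument (fixing a single section $\theta$ to force naturality of the Leray--Hirsch isomorphism) goes through as you describe.
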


\begin{proof} This can be done following \cite{zbMATH05848700} word by word. The key idea is that the upper right composition is induced by the functor that sends a retractive space $X$ over $E$ to the chain complex $C_*(q^*X,D;q^*\F)$ and the lower left composition is induced by the functor that send the retractive space $X$ to the chain complex $C_*(X,E;\F)\otimes H_*(F).$ The Leray-Hirsch isomorphism for the bundle pair $(q^*X,D)\to (D,E)$ provides the homotopy rendering the diagram commutative.
\end{proof}

Let $q:D\to E$ be a Leray-Hirsch bundle with local system $\F\to E.$ From now on we will refer to the local system $q^*\F\to D$ simply by $\F.$ The homotopy from the Lemma above defines a map $Wh^\F(q^!):Wh^\F(E)\to Wh^\F(D)$ fitting in the diagram  

	$$\xymatrix{
		Wh_\F(E)\ar[d]\ar@{.>}[r]^{Wh^\F(q^!)}						&	Wh_\F(D)\ar[d]\\
		\tilde Q(E_+)\ar[r]^{\tilde Q(q^!)}\ar[d]^{\lambda_\F}	&	\tilde Q(D_+)\ar[d]^{\lambda_\F}\\
		K(\C)\ar[r]^{\otimes H_*(F)}																			&	K(\C).
		}
	$$

\begin{Remark} Notice that in the setting above the map $\otimes H_*(F):K(\C)\to K(\C)$ represents multiplication by $\chi(F)$ in the infinite loop space structure. This gives a commutative diagram
	$$\xymatrix{
		\Omega K(\C)\ar[d]\ar[r]^{\cdot \chi(F)}	&	\Omega K(\C)\ar[d]\\
		Wh^\F(E)\ar[r]^{Wh^\F(q^!)}	&	Wh^\F(D)}.
	$$
The vertical maps are simply induced by the bundle structures.
\end{Remark}

\begin{Proposition} Let $q:D\to E$ a Leray-Hirsch bundle over a unipotent bundle $q:E\to B$ with local system $\F\to E.$ Then the following diagram commutes up to homotopy
	$$\xymatrix{
		&	Wh^\F(E)\ar[d]^{Wh^\F(q^!)}\\
		|\sing B|\ar[ur]^{\tau_\F(p)}\ar[r]_{\tau(pq)}	&	Wh^\F(D)}
	$$
\end{Proposition}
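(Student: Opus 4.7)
The two maps $Wh^\F(q^!)\circ\tau_\F(p)$ and $\tau(pq)$ are both lifts to the homotopy fiber $Wh^\F(D)$ of the common map $\tilde Q(q^!)\circ p^! \simeq (pq)^!\colon|\sing B|\to\tilde Q(D_+)$. Any such lift is determined up to homotopy by the underlying map together with a contracting homotopy of its image in $K(\C)$ (reduced at the basepoint), so it suffices to exhibit a preferred homotopy between the two contracting homotopies in $K(\C)$ used on either side. On the upper-route side, $\tau_\F(p)$ uses the homotopy $\omega_\F$ of Theorem \ref{contraction}, and $Wh^\F(q^!)$ then concatenates this with the Leray-Hirsch homotopy $\eta$ from Lemma \ref{Transferlemma} and its post-composition with $(-\otimes H_*(F))\circ\omega_\F$. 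On the lower-route side, $\tau(pq)$ uses the single homotopy $\omega_{q^*\F}$ produced from the unipotent filtration of $H_*(q^{-1}(F_{b_0});q^*\F)$.

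To compare them, first verify that $pq$ is unipotent so that $\tau(pq)$ is defined. Applying the Leray-Hirsch theorem to the restriction of $q$ to the basepoint fiber gives a natural isomorphism
$$H_*(q^{-1}(F_{b_0}); q^*\F) \cong H_*(F_{b_0};\F)\otimes H_*(F),$$
and tensoring the unipotent filtration $V_i(F_{b_0})$ by $H_*(F)$ yields a $\pi_1 B$-filtration with trivial action on subquotients. The same argument produces a compatible filtration on each fiber $H_*(q^{-1}(F_{\sigma_0});q^*\F)$.

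Next, construct the preferred homotopy stage by stage along the construction of $\omega_\F$ and $\omega_{q^*\F}$. Both are assembled from successive applications of Waldhausen's additivity theorem, passing through the homology functor, the associated graded of the unipotent filtration, and the trivial-monodromy replacement. At each stage, applying the Leray-Hirsch isomorphism fiberwise to the bundle pair $(\sigma^*D,D)\to(\sigma^*E,E)$ exhibits the corresponding stage of $\omega_{q^*\F}$ as the tensor of the corresponding stage of $\omega_\F$ with $H_*(F)$, pre-composed with $\eta$. Concatenating these stagewise identifications yields the desired preferred homotopy, and therefore a homotopy $Wh^\F(q^!)\circ\tau_\F(p)\simeq\tau(pq)$ as maps into $Wh^\F(D)$.

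The main obstacle is in the bookkeeping of this stagewise comparison: one must verify that $\eta$ respects not only the total homology but each step of the unipotent filtration, and that the path-lifting trivializing the $\pi_1 B$-action on subquotients of $H_*(F_{b_0};\F)$ is compatible under $\eta$ with the corresponding trivialization on $H_*(q^{-1}(F_{b_0});q^*\F)$. This reduces, via the naturality built into the Leray-Hirsch theorem, to the observation that $q^*\F$ is trivial along the fibers of $q$, so the filtration on the fiber of $pq$ is literally the tensor product of the filtration on the fiber of $p$ with $H_*(F)$. With this in place, the argument closely follows the one carried out in \cite{zbMATH05848700} for the untwisted case.
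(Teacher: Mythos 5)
Your proposal is correct and follows essentially the same strategy as the paper: both arguments reduce the commutativity of the Whitehead-space diagram to a comparison of the two contracting homotopies of $c_{\F,pq}$ in $K(\C)$ (one obtained by concatenating $\omega_\F$ with the Leray--Hirsch homotopy of Lemma \ref{Transferlemma}, the other being the intrinsic contraction $\omega_{q^*\F}$), and both then appeal to the naturality of the Leray--Hirsch isomorphism and the untwisted argument of \cite{zbMATH05848700}. Your write-up is somewhat more explicit than the paper's about the stagewise comparison of the Waldhausen-additivity homotopies and about why $pq$ is unipotent, but the underlying idea is the same.
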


\begin{proof} This again follows \cite{zbMATH05848700}. Here are the key ideas: the homotopy of Lemma \ref{Transferlemma} together with the algebraic contraction of $c_{\F,p}:|\sing B|\to K(\C)$ gives a homotopy between $c_{\F,pq}$ and the constant map with value $H^*(F_p;\F)\otimes H^*(F_q)\in K(\C).$ This homotopy together with the map $(pq)^!$ gives a map
	$$\kappa_\F(pq):|\sing B|\to Wh^\F(D)_{H^*(F_p;\F)\otimes H^*(F_q)}$$
and the reduction of this is homotopic to $Wh^\F(q^!)\circ \tau_\F(q).$ 

The map $\tau_\F(pq)$ is induced by the map $(pq)^!:|\sing B|\to \tilde Q(D_+)$ and the algebraic contraction of $c_{pq}$ to $H_*(F_{pq}).$ Now one can check \cite{zbMATH05848700} that the Leray-Hirsch theorem eventually induces a homotopy rendering the diagram in question commutative.
\end{proof}

The proof of the theorem is now parallel to \cite{zbMATH05848700} with the only difference being that the universal class is $b_{2k}\in H^{2k}(Wh^\F(BG))$ rather than $b_{4k}\in H^{4k}(Wh^\Q(*)).$

\subsection{Transfer of coefficients}

Let $p:E\to B$ and $\tilde p:\tilde E\to B$ be smooth fiber bundles such that $\pi:\tilde E\to E$ is a fiberwise finite covering. Furthermore, let  $\F\to \tilde E$ be a finite unipotent local system corresponding to a representation $\rho:H\to U(m),$ where $H$ is some finite group that the representation of $\pi_1 \tilde E$ factors through. This induces a unipotent local system $\pi_* \F\to E.$ Notice that we can view $H$ as a subgroup of a bigger group $G$ such that the local system $\pi_* \F$ corresponds to the induced representation $\Ind_H^G \rho:G\to U(m).$

\begin{Proposition}[Transfer of coefficients]\label{transfer} In the setting above we have
	$$\tau(\tilde p;\F)=\tau(p;\pi_* \F)\in H^{2k}(B;\R).$$
\end{Proposition}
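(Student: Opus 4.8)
The plan is to reduce the transfer-of-coefficients statement to a comparison of the two torsion maps at the level of the Whitehead spaces, using the fiberwise transfer of the covering $\pi:\tilde E\to E$ together with naturality of all the ingredients under the classifying maps. The key observation is that a fiberwise finite covering $\pi:\tilde E\to E$ induces a transfer map $\pi^!:\tilde Q(E_+)\to\tilde Q(\tilde E_+)$ (again obtained from pulling back partitions, since $\pi$ is a submersion with $0$-dimensional fibers) which is compatible with the base transfers, i.e. $\pi^!\circ p^!\simeq(\tilde p)^!:|\sing B|\to\tilde Q(\tilde E_+)$. On the linearization side, the defining property of $\pi_*\F=\pi_!\F$ is exactly the Shapiro-type isomorphism $C_*(Y;\pi_*\F)\cong C_*(\pi^*Y;\F)$ for retractive spaces $Y$ over $E$ (equivalently $H_*(X;\Ind_H^G\rho)\cong H_*(\tilde X;\rho)$ for the associated covers), so that $\lambda_{\pi_*\F}\simeq\lambda_{\F}\circ\tilde Q(\pi^!)$ as maps $\tilde Q(E_+)\to K(\C)$. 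Together these give a homotopy-commutative square with $\pi^!$ on top and the identity of $K(\C)$ on the bottom, hence an induced map on homotopy fibers $Wh_{\pi_*\F}(E)\to Wh_{\F}(\tilde E)$, and I would check that it carries $\tau_{\pi_*\F}(p)$ to $\tau_{\F}(\tilde p)$ by verifying that it also respects the contracting homotopies $\omega$ of Theorem \ref{contraction}: the fiber of $\tilde p$ over $b_0$ is the $\pi$-preimage of the fiber $F$ of $p$, and the unipotent filtration of $H_*(F;\pi_*\F)$ matches, under Shapiro, the unipotent filtration of $H_*(\tilde F;\F)$.

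Next I would pass to the $BG$-approximation picture. Since $H\subset G$, the inclusion $EG(N)\to EG(N)$ regarded with the two group actions, together with $\pi_*\F$ corresponding to $\Ind_H^G\rho$, gives compatible classifying maps $\phi:E\to BG(N)$ and $\tilde\phi:\tilde E\to BH(N')$ fitting into a square with the fiberwise covering $BH(N')\to BG(N')$ (the quotient $EG/H\to EG/G$) on one side. The transfer map $\pi^!$ for the universal covering $BH\to BG$ is then the model against which the Borel classes are compared: one has $Wh_{\pi_*\F}(q^!):Wh_{\pi_*\F}(BG(N))\to Wh_{\F}(BH(N'))$, and the universal Borel regulator class $b_i\in H^{2i}(Wh_{\F}(BG))$ pulls back correctly precisely because, after rationalization, $\tilde Q(BG_+)$ and $\tilde Q(BH_+)$ are both rationally contractible and the square over $K(\C)$ commutes; so $(q^!)^*$ sends the $b_i$ on the $BG$-side to the $b_i$ on the $BH$-side. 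Chasing the resulting commuting prism — fiberwise transfer on the left column, classifying maps on the right, torsion maps feeding in from $|\sing B|$ — then yields
\[
\tau_i(\tilde E;\F)=\tau_{\F}(\tilde p)^*\tilde\phi^*_{Wh}(b_i)=\tau_{\pi_*\F}(p)^*(q^!\circ\phi)^*_{Wh}(b_i)=\tau_{\pi_*\F}(p)^*\phi^*_{Wh}(b_i)=\tau_i(E;\pi_*\F),
\]
which is the claim.

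The main obstacle I anticipate is bookkeeping the homotopies at the point-set level so that the square of homotopy fibers is genuinely homotopy-commutative, not just the outer square: one must produce the Shapiro isomorphism $C_*(\pi^*Y;\F)\cong C_*(Y;\pi_*\F)$ \emph{functorially} in the retractive space $Y$ and the simplex $\sigma$, check it is compatible with the suspension/additivity manipulations used to build the contraction $\omega$, and verify it is an $H$-space map so that the reduced torsion (defined by subtracting the basepoint value) is handled correctly. A secondary subtlety is the dependence on $N$: the covering transfer on the $EG(N)$-models need not preserve the connectivity bounds exactly, so one should choose $N'$ (the approximation level on the $BH$-side) large compared with $N$ and $\dim B$ and invoke Remark \ref{independence}-style arguments to see nothing depends on these choices. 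Neither issue is conceptually hard — everything is "pull back partitions and chain complexes along $\pi$" — but it is where the care has to go.
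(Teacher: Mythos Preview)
Your proposal is correct and follows essentially the same route as the paper: the paper also builds the map $\tilde Q(\pi^*):\tilde Q(E_+)\to\tilde Q(\tilde E_+)$ by pulling back partitions along the covering, checks $\tilde Q(\pi^*)\circ p^!=\tilde p^!$ on the nose, uses the Shapiro-type isomorphism $H_*(E;\pi_*\F)\cong H_*(\tilde E;\F)$ to compare the linearizations, and then passes to the universal covering $\pi_{uni}:BH(N)\to BG(N)$ and the rationally contractible $\tilde Q(BG_+)$, $\tilde Q(BH_+)$ to identify the Borel classes. The compatibility of contracting homotopies that you flag as the main obstacle is exactly the content of the paper's Lemma~\ref{compatibel}, and your worry about $N$ versus $N'$ is unnecessary: since $H\subset G$ acts on the same $EG(N)$, one can take the same approximation level on both sides.
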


\begin{proof} The proof is guided by the following diagram:

$$\xymatrix{
	&&	Wh_\F(\tilde E)\ar[rr]\ar[dd]|!{[dl];[dr]}\hole	&&	Wh(BH(N)_+) \ar[dd]\\	
	&	Wh_{\pi_*\F}(E)\ar@{.>}[ur]^{Wh(\pi^*)}\ar[rr]\ar[dd]	&&	Wh(BG(N)_+)\ar@{.>}[ur]^{Wh(\pi^*_{uni})}\ar[dd]\\	
	&& \tilde Q(\tilde E_+)\ar[rr]^(.3){\tilde \phi_Q}|!{[ur];[dr]}\hole\ar[dd]^(.3){\lambda_\F}|!{[dl];[dr]}\hole	&&	\tilde Q(BH(N)_+)\ar[dd]\\	
	&	\tilde Q(E_+)\ar[ur]^{\tilde Q(\pi^*)}\ar[rr]^(.7){\phi_Q}\ar[dd]^(.3){\lambda_{\pi_*\F}}	&&	\tilde Q(BG(N)_+)\ar[ur]^{\tilde Q(\pi^*_{uni})}\ar[dd]\\	
	|\sing B|\ar@{.>}@/^5pc/[uuuurr]^{\tau^{\tilde E}_\F}\ar@{.>}@/^2pc/[uuur]_{\tau^E_{\pi_*\F}}\ar@/^2pc/[uurr]^{\tilde p^!}|!{[uuur];[ur]}\hole\ar[ur]^{p^!}\ar[rr]^(.3){c_\F}|!{[ur];[dr]}\hole \ar[dr]^{c_{\pi_*\F}}	
	&&	K(\C)\ar@{=}[rr]|!{[ur];[dr]}\hole	&&	K(\C)\\	
	&	K(\C)\ar@{=}[ur]\ar@{=}[rr]	&&	K(\C)\ar@{=}[ur]
	}
$$

We first verify that this diagram commutes up to preferred homotopy. The front and backface represent the torsion of $p$ and $\tilde p$ and hence commute up to preferred homotopy.

The bottom left triangle (containing $c_\F$ and $c_{\pi_* \F}$) commutes up to a preferred homotopy since $c_\F$ (and similarly $c_{\pi_*\F}$) is induced by sending a simplex $\sigma:\Delta^n\to B$ to the chain complex $C_*(\sigma^*(\tilde E),\F)$ and there is an isomorphism
	$$H_*(E;\pi_*\F)\cong H_*(\tilde E;\F).$$
	
The map $\tilde Q(\pi^*)$ is defined on $Q(E_+)$ by sending a partition $M$ of $E\times I$ to the partition $(\pi\times \id)^{-1}(M)$ of $\tilde E\times I.$ We can make a similar definition on $A(E)$ and this will define the map $\tilde Q(\pi^*):\tilde Q(E_+)\to \tilde Q(\tilde E_+).$ With this definition the triangle containing $p^!$ and $\tilde p^!$ commutes on the nose. 

To see that the square containing $\lambda_\F$ and $\lambda_{\pi_1\F}$ commutes up to homotopy we first remark that the following square commutes (on the nose)
	$$\xymatrix{
		Q(E_+)\ar[r]^{Q(\pi^*)}\ar[d]	& Q(\tilde E)\ar[d]\\
		A_p(E)\ar[r]	&	A_p(\tilde E)
		}
	$$
which can be seen by comparing the functors inducing those maps. Similarly, the same can be shown if we replace $Q$ with $A$ and hence it is enough to see that the square 
	$$\xymatrix{
		A(E_+)\ar[r]^{A(\pi^*)}\ar[d]_{\lambda_{\pi_*\F}}	&	A(\tilde E_+)\ar[d]^{\lambda_\F}\\
		K(\C)\ar@{=}[r]	&	K(\C)
	}
	$$
commutes up to homotopy. Again, this is because the top right composition comes from the functor sending a retractive space $X$ over $E$ to the chain complex $C_*(\pi^* X, \tilde E;\F)$ and the bottom left composition would send it to $C_*(X,E;\pi_*\F)$ and both complexes have the same homology. This homotopy commutativity induces the map $Wh(\pi^*).$

Since the local system $\F\to \tilde E$ is unipotent we know that both maps $c_\F$ and $c_{\pi_*\F}$ are homotopy contractible, giving the lifts $p^{Wh}$ and $\tilde p^{Wh}.$ We will verify in the next Lemma that all homotopies used are compatible which guarantees that the triangle containing $p^{Wh}$ and $\tilde p^{Wh}$ also commutes up to preferred homtopy.

Notice that there is a covering $\pi_{uni}:BH(N)\to BG(N)$ so for the same reason as above the square on the right commutes up to homotopy. Since the maps $\phi_Q$ and $\tilde \phi_Q$ are induced by actual maps of spaces $E\to BG(N)$ and $\tilde E\to BH(N)$ (themselves induced by the local system) it is clear that the middle horizontal square commutes. 

Hence the whole diagram is homotopy commutative. We know that the torsion of $p$ pulls back from a universal class $b_k^G\in H^{2k}(Wh(BG(N));\Reals)$ and the torsion of $\tilde p$ pulls back from the universal class $b_k^H\in H^{2k}(Wh(BH(N));\Reals).$ So all we need to do is to show
	$$Wh(\pi^*_{uni})^*b^H_k=b^G_k.$$

This follows since after stabilization and functorial cofibrant replacement the classes correspond to classes $b_k^G\in H^{2k}(Wh(BG)_\Q;\Reals)$ and $b_k^H\in H^{2k}(Wh(BH)_\Q;\Reals).$ These classes are pulled back from a universal class $b_k\in \Omega K(\C)_\Q$ via the diagram
	$$\xymatrix{
		&	Wh(BH)_\Q\ar[dd]|!{[dl];[dr]}\hole\ar[dr]\\
		Wh(BG)_\Q\ar[ur]\ar[rr]\ar[dd]	&&	\Omega K(\C)_\Q\ar[dd]\\
		&	\tilde Q(BH)_\Q\ar[dr]^\simeq\ar[dd]|!{[dl];[dr]}\hole \\
		\tilde Q(BG)_\Q\ar[dd]\ar[ur]\ar[rr]^(.7)\simeq	&&	\ast_\Q\ar[dd]\\
		&	K(\C)_\Q\ar@{=}[dr]\\
		K(\C)_\Q\ar@{=}[ur]\ar@{=}[rr] && K(\C)_\Q
		}
	$$
Since the contractions of $\tilde Q(BG)$ and $\tilde Q(BH)$ are compatible ($BH\to BG$ is a finite covering space) this diagram commutes up to homotopy and finishes the proof.

\end{proof}

\begin{Lemma}\label{compatibel} The homotopies used in in the proof of Proposition \ref{transfer} are compatible, meaning that there exists a preferred homotopy $Wh(\pi^*)\tau^E_{\pi_*\F}\simeq \tau^{\tilde E}_\F.$
\end{Lemma}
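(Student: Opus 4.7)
My plan is to unfold the constructions of the two maps $\tau^{\tilde E}_\F$ and $Wh(\pi^*)\tau^E_{\pi_*\F}$ and match them stage by stage. Each torsion map is assembled from three ingredients: (i) a transfer map into $\tilde Q$, (ii) the linearization square, and (iii) the contracting homotopy $\omega$ of Theorem \ref{contraction}. The map $Wh(\pi^*)$ is itself built from the square homotopy in stage (ii). I will check compatibility at each stage in turn.

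For stage (i), I will observe that pulling a partition of $E\times I$ back along $p$ and then along $\pi$ agrees on the nose with pulling it back along $\tilde p$, so $\tilde Q(\pi^*)\circ p^! = \tilde p^!$ strictly, with no homotopy obstruction. For stage (ii), I will note that for any retractive space $X$ over $E$ there is a canonical quasi-isomorphism $C_*(X,E;\pi_*\F)\simeq C_*(\pi^*X,\tilde E;\F)$, which is Shapiro's lemma applied fiberwise. This same identification defines both the homotopy $\lambda_\F\,\tilde Q(\pi^*)\simeq \lambda_{\pi_*\F}$ (and hence the map $Wh(\pi^*)$) and the homotopy $c_{\pi_*\F}\simeq c_\F$ in the bottom triangle of the big diagram, so compatibility at this stage is tautological.

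The heart of the argument lies in stage (iii). The contraction $\omega_\F$ produced in the proof of Theorem \ref{contraction} is the concatenation of three sub-homotopies: first replace $c_\F$ by the homology functor $|H^0_\F|$ via the $P_qC$-filtration lemma; then parallel-transport in $B$ to replace $H_*(F_{\sigma_0};\F)$ by $H_*(F_{b_0};\F)$ up to the unipotent filtration $V_\bullet$; finally collapse that filtration by another application of Waldhausen additivity. I will check that each sub-step is natural under the finite covering $\pi$: the canonical isomorphism $H_*(\tilde F;\F)\cong H_*(F;\pi_*\F)$ is $\pi_1 B$-equivariant, carries the unipotent filtration of $H_*(\tilde F;\F)$ onto that of $H_*(F;\pi_*\F)$, commutes with parallel transport along paths in $B$, and commutes with the chain-level truncations $P_qC$. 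Consequently the three sub-homotopies making up $\omega_\F$ correspond under Shapiro to the three sub-homotopies making up $\omega_{\pi_*\F}$, and concatenating them yields the desired preferred homotopy at the Whitehead level.

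The main obstacle I anticipate is the bookkeeping at stage (iii): at each concatenation point one must verify that the identifications chosen agree under Shapiro's isomorphism, and that the additivity-theorem homotopies are applied to matching pairs of filtrations. Because each individual isomorphism involved is strictly natural with respect to the finite covering $\pi$, the matching should be strict rather than requiring higher coherence data, and the lemma will reduce to assembling these three naturality statements with the trivial strict compatibility from stage (i) and the tautological compatibility from stage (ii).
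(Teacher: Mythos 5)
Your proposal is correct and takes essentially the same route as the paper: both arguments decompose the torsion map into the transfer component (strict compatibility), the linearization square (Shapiro's lemma providing the identification that simultaneously defines $Wh(\pi^*)$ and the homotopy $c_{\pi_*\F}\simeq c_\F$), and the contraction $\omega$, with the latter further split into the three sub-homotopies from Theorem \ref{contraction} and each checked to be strictly natural under $\pi$. The only point worth flagging is that since the unipotent filtration in Definition \ref{unipotent} is chosen rather than canonical, one should explicitly take the filtration on $H_*(F;\pi_*\F)$ to be the image under the Shapiro isomorphism of the one chosen on $H_*(\tilde F;\F)$; both your write-up and the paper leave this implicit.
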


\begin{proof} As laid out in \cite{zbMATH05848700} we need to provide homotopies $\tilde Q(\pi^*)p^!\simeq \tilde p^!,$ $\lambda_{\pi_*\F}\simeq \lambda_\F\tilde Q(\pi^*),$ a path $\gamma$ in $K(\C)$ connecting $*_{H_*(F_{b_0},\pi_*\F)}$ and $*_{H_*(\tilde F_{b_0};\F)}$ (where $b_0\in B$ is the base point) and a homotopy of homotopies filling the diagram (where the corners are maps $|\sing B|\to K(\C)$ and the arrows are homotopies) 
	$$\xymatrix{
		p^!\lambda_{\pi_*\F}\ar[r]\ar[d]^\omega	&	\tilde p^!\lambda_\F\ar[d]^{\omega'}\\
		(pt)_{H_*(F_{b_0},\pi_*\F)}\ar[r]^\gamma	&		(pt)_{H_*(\tilde F_{b_0};\F)}.
		}
	$$
The horizontal homotopies were already given in the proof of the proposition. Recall that they are induced by the canoncial isomorphism 
	$$H_*(E;\pi_*\F)\cong H_*(\tilde E;\F).$$
The path $\gamma$ will be induced by the canonical isomorphism
	$$H_*(F_{b_0};\pi_*\F)\cong H_*(\tilde F_{b_0};\F).$$
The homotopies $\omega$ and $\omega'$ were constructed in the proof of theorem \ref{contraction}. Recall that they were constructed as the concatenation of three homotopies. By analyzing each of those three homotopies, the square above splits in three squares, where the vertical homotopies are induced similarly to how $\gamma$ was induced. Hence it is easy to see that all of those squares will commute (on the nose).
\end{proof}

\subsection{Additivity of coefficients}

Let $p:E\to B$ be a smooth bundle and $\F_1\to E$ and $\F_2\to E$ be two unipotent local system on $E$ corresponding to representations $G_1\to U(m_1)$ and $G_2\to U(m_2).$ Then the sum $\F_1\oplus \F_2\to E$ is a unipotent local system on $E$ corresponding to the representation $\rho_1\oplus \rho_2:G_1\times G_2\to U(m_1+m_2).$ We aim to prove

\begin{Proposition}[Additivity of coefficients] In the setting above we have 
	$$\tau_k(p;\F_1)+\tau_k(p;\F_2)=\tau(p;\F_1\oplus \F_2)\in H^{2k}(B;\Reals)$$
for any $k\in\mathbb{N}.$
\end{Proposition}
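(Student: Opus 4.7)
The strategy exploits the additivity of the linearization map in its coefficient argument, together with the primitivity of the Borel regulator on the H-space $K(\C)$. The starting observation is the natural isomorphism of functors $C_*(X,E;\F_1\oplus\F_2)\cong C_*(X,E;\F_1)\oplus C_*(X,E;\F_2)$ on the category of finitely dominated retractive spaces over $E$. Applying Waldhausen's additivity theorem (as used throughout the paper) to this direct sum decomposition yields a preferred homotopy $\lambda_{\F_1\oplus\F_2}\simeq \lambda_{\F_1}+_K\lambda_{\F_2}$ of maps $\tilde Q(E_+)\to K(\C)$, where $+_K$ denotes the infinite loop space addition on $K(\C)$.

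This additivity propagates to the torsion maps themselves. A filtration of $H_*(F;\F_1\oplus\F_2)$ by $\pi_1 B$-submodules can be built as the direct sum of filtrations of $H_*(F;\F_i)$ witnessing unipotence of each $\F_i$, so that the contracting homotopies of Theorem \ref{contraction} satisfy $\omega_{\F_1\oplus\F_2}\simeq \omega_{\F_1}+_K\omega_{\F_2}$. Since the transfer $p^!$ is independent of the coefficient system, the constructed lift decomposes: there is a preferred homotopy $\tau_{\F_1\oplus\F_2}(p)\simeq \tau_{\F_1}(p)+_{Wh}\tau_{\F_2}(p)$, interpreted via the natural map $Wh_{\F_1}(E)\times_{\tilde Q(E_+)}Wh_{\F_2}(E)\to Wh_{\F_1\oplus\F_2}(E)$ induced by the H-space structures and the addition on $K(\C)$.

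For the universal class I would use the product equivalence $B(G_1\times G_2)(N)\simeq BG_1(N)\times BG_2(N)$ from the naturality subsection to set up a compatible square of Whitehead spaces. The class $b_k\in H^{2k}(Wh_{\F_1\oplus\F_2}(B(G_1\times G_2));\Reals)$ is defined by pullback from the Borel regulator on $\Omega K(\C)$, which is primitive for the H-space structure on $K(\C)$ (equivalently, the Borel regulator is additive on K-theory). Primitivity implies that for any two maps $f,g:X\to \Omega K(\C)$ one has $(f+_Kg)^*b_k=f^*b_k+g^*b_k$ in real cohomology, and combining this with the decomposition of $\tau_{\F_1\oplus\F_2}(p)$ from the previous paragraph yields $\tau_k(p;\F_1\oplus\F_2)=\tau_k(p;\F_1)+\tau_k(p;\F_2)$.

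The main obstacle is bookkeeping at the universal level. One must verify that the cofibrant replacements, rationalization, and preferred contracting homotopies used to construct $Wh_{\F_1\oplus\F_2}(BG(N))$ are coherent with those for $Wh_{\F_i}(BG_i(N))$ under the identification $B(G_1\times G_2)\simeq BG_1\times BG_2$, and that the resulting diagram of rational Whitehead spaces is compatible with the H-space sum in $\Omega K(\C)$. Once these naturality checks are in place, the additivity reduces to the primitivity of the Borel regulator, a well-known property from \cite{zbMATH03495395}. The overall argument closely parallels Proposition \ref{transfer}, with the product $G_1\times G_2$ playing a role analogous to $\Ind_H^G$ in the transfer-of-coefficients step.
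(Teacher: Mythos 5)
Your proposal follows essentially the same path as the paper: the split of $C_*(X,E;\F_1\oplus\F_2)$ into the two summands, which makes $\lambda_{\F_1\oplus\F_2}$ factor through $\oplus\colon K(\C)\times K(\C)\to K(\C)$; the compatible decomposition of the contracting homotopies from the direct-sum filtration of $H_*(F;\F_1\oplus\F_2)$; the identification $B(G_1\times G_2)\simeq BG_1\times BG_2$; and primitivity of the Borel regulator on $\Omega K(\C)$ (the paper makes this last point its own Lemma). The paper packages all of this into explicit cubical diagrams via an auxiliary Whitehead space $Wh'_{\rho_1,\rho_2}(E)=\hofib(\lambda_{\rho_1}\times\lambda_{\rho_2})$ rather than a loose ``$+_{Wh}$'' notation, and does not need to invoke Waldhausen additivity for the split exact sequence (a direct-sum decomposition already gives the required homotopy), but these are presentational differences rather than a different argument.
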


\begin{proof} In this proof only, we will rather work with the representations $\rho_i$ corresponding to the local systems $\F_i$ to emphasize how the concrete model of $BG_i$ depends on the dimension of the representation. The proof is guided by the following diagram:

{\tiny
$$\xymatrix{
	&&	Wh'_{\rho_1,\rho_2}(E)\ar@{.>}[dl]\ar[rr]\ar[dd]|!{[dl];[dr]}\hole	&&	Wh(BG_1(N))\times Wh(BG_2(N))\ar[dl]\ar[dd]\\
	&	Wh_{\rho_1\oplus \rho_2}(E)\ar[rr]\ar[dd]	&&	Wh(BG_1(N)\times BG_2(N))\ar[dd]\\
	&& \tilde Q(E_+)\ar[rr]^(.3){\tilde Q(\rho_1)\times \tilde Q(\rho_2)}|!{[ur];[dr]}\hole\ar[dd]^(.3){\lambda_{\rho_1}\times \lambda_{\rho_2}}|!{[dl];[dr]}\hole && \tilde Q(BG_1(N))\times \tilde Q(BG_2(N))\ar[dd]\ar[dl]^\cong\\
	& \tilde Q(E_+)\ar@{=}[ur]\ar[rr]_(.3){\tilde Q(\rho_1\oplus\rho_2)}\ar[dd]^(.3){\lambda_{\rho_1\oplus\rho_2}}	&&	\tilde Q(B(G_1(N)\times G_2(N)))\ar[dd]\\
	|\sing B|\ar@{.>}@/^5.5pc/[uuuurr]^{p_{\rho_1,\rho_2}^{Wh}}\ar@{.>}@/^2pc/[uuur]_{\tau_{\F_1\oplus\F_2}}\ar@/^2pc/[uurr]^(.3){ p^!}|!{[uuur];[ur]}\hole\ar[ur]^{p^!}\ar[rr]^(.3){c_{\rho_1}\times c_{\rho_2}}|!{[ur];[dr]}\hole \ar[dr]^{c_{\rho_1\oplus \rho_2}}
	&&	K(\C)\times K(\C)\ar@{=}[rr]|!{[ur];[dr]}\hole\ar[dl]^\oplus	&&	K(\C)\times K(\C)\ar[dl]^\oplus\\
	&	K(\C)\ar@{=}[rr]	&&	K(\C)
	}
$$}
	
We first establish that this diagram commutes up to preferred homtopy. For the front face this is clear since it simply represents the torsion $\tau(E,\F_1\oplus \F_2).$ The space $Wh'_{\rho_1,\rho_2}(E)$ simply denotes the homotopy fiber (over the base point) of $\lambda_{\rho_1}\times \lambda_{\rho_2}:\tilde Q(E_+)\to K(\C)\times K(\C),$ and with this definition it is clear that the back face also commutes up to preferred homotopy. All of the triangles and squares on the very left clearly commute up to a preferred homotopy because there is an isomorphism
	$$H_n(E,\rho_1\oplus \rho_2)\cong H_n(E,\rho_1)\oplus H_n(E,\rho_2).$$
It can be verified that this behaves well with respect to the contracting homotopies and therefore gives rise to commutativity up to homotopy on the level of the homotopy fibers. This is done by the same strategy as in Lemma \ref{compatibel}, just that in this case the homotopies are all induced by the isomorphism $H_*(E;\rho_1\oplus\rho_2)\cong H_*(E;\rho_1)\oplus H_*(E;\rho_2).$ The same arguments apply to the squares on the right hand side.

Next we analyze the origin of the higher torsion. After stabilization and rationalization we have the following commutative (up to preferred homotopy) diagram

$$\xymatrix{
	& Wh(BG_1)_\Q\times WH(BG_2)_\Q\ar[rr]\ar[dl]\ar[dd]|!{[dl];[dr]}\hole	&&	\Omega K(\C)_\Q\times \Omega K(\C)_\Q\ar[dl]^\oplus\ar[dd]\\
	Wh(B(G_1\times G_2))_\Q\ar[rr]\ar[dd]	&&	\Omega K(\C)_\Q\ar[dd]\\
	& \tilde Q(BG_1)_\Q\times \tilde Q(BG_2)_\Q\ar[rr]|!{[ur];[dr]}\hole\ar[dl]^\cong\ar[dd]|!{[dl];[dr]}\hole	&&	\ast_\Q\ar[dd]\ar@{=}[dl]\\
	\tilde Q(B(G_1\times G_2))_\Q\ar[dd]\ar[rr]	&&	\ast_\Q\ar[dd]\\
	&	K(\C)_\Q\times K(\C)_\Q\ar@{=}[rr]|!{[ur];[dr]}\hole\ar[dl]^\oplus	&&	K(\C)\times K(\C)\ar[dl]^\oplus\\
	K(\C)\ar@{=}[rr] &&	K(\C).
	}
$$
Recall that the higher torsion is pulled back from the Borel regulator $b_k\in H^{2k}(\Omega K(\C);\Reals).$ We will verify in a Lemma below that under the map $\oplus:\Omega K(\C)\times \Omega K(\C)\to \Omega K(\C)$ the class $b_k$ pulls back exactly to
	$$b_k\otimes 1+1\otimes b_k\in H^{2k}(\Omega K(\C)\times \Omega K(\C);\Reals).$$
Therefore we will now investigate what this element pulls back to along the back face of the first diagram. 

For this consider the following diagram

$$\xymatrix{
	&	Wh'_{\rho_1,\rho_2}(E)\ar[d]\ar[r]	&	Wh_{\rho_1}(E)\times Wh_{\rho_2}(E)\ar[d]\ar[r]	&	Wh(BG_1(N))\times Wh(BG_2(N))\ar[d]\\
	&	\tilde Q(E_+)\ar[r]^\Delta\ar[d]\ar[d]^(.3){(\lambda_{\rho_1},\lambda_{\rho_2})}	&	\tilde Q(E_+)\times \tilde Q(E_+)\ar[d]^{\lambda_{\rho_1}\times \lambda_{\rho_2}}\ar[r]	&	\tilde Q(BG_1(N))\times \tilde Q(BG_2(N))\ar[d]\\
	|\sing B|\ar[ur]^{p^!}\ar[urr]|!{[ur];[r]}\hole_(.6){(p^!,p^!)}\ar[r]_{c_{\rho_1\oplus \rho_2}}\ar[uur]\ar@/^6pc/[uurr]^{(\tau_{\F_1},\tau_{\F_2})}	&	K(\C)\times K(\C)\ar@{=}[r]	&	K(\C)\times K(\C)\ar@{=}[r]	&	K(\C)\times K(\C)}
$$
It is clear that this diagram commutes and the upper horizontal composition is exactly the upper horizontal composition on the back face of the original diagram. This shows that the element $b_k\otimes 1+1\otimes b_k\in H^{2k}(\Omega K(\C)\times \Omega K(\C);\Reals)$ pulls back along the backface of the first diagram to 
	$$\tau(E,\F_1)+\tau (E,\F_2)\in H^{2k}(B;\Reals)$$
and this completes the proof.

\end{proof}

\begin{Lemma} Let $X$ be a unital H-space and $\oplus:\Omega X\times \Omega X\to \Omega X$ be induced by the H-space map. Furthermore, let $R$ be a ring and $b\in \tilde H^k(X;R)$ be represented by the map $f:X\to K(R,k)$ and $\tau(b)\in \tilde H^{k-1}(X;R)\subset H^{k-1}(X;R)$ be represented by the map $\Omega f:\Omega X\to K(R,k-1).$ Then the pull back along $\oplus$ of $\tau(b)$ gives
	$$\oplus^*\tau(b)=\tau(b)\otimes 1+1\otimes \tau(b)\in H^{k-1}(\Omega X\times \Omega X).$$
\end{Lemma}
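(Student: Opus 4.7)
The plan is to reduce the lemma to two standard ingredients: first, that the two natural H-space structures on $\Omega X$ (loop concatenation, and pointwise multiplication inherited from $\mu$) agree up to homotopy by an Eckmann-Hilton argument; and second, that the cohomology suspension $\tau(b)$ is represented by a loop map. Unwinding the definition, under the natural homeomorphism $\Omega X \times \Omega X \cong \Omega(X \times X)$ the map $\oplus$ is identified with $\Omega \mu$, where $\mu : X \times X \to X$ is the H-space multiplication, so $\oplus(\alpha,\beta)(t) = \mu(\alpha(t),\beta(t))$. I would denote by $m_L : \Omega X \times \Omega X \to \Omega X$ the standard loop concatenation, and observe that both operations share the constant loop $e$ as (homotopy) unit; for $\oplus$ this uses that the basepoint of $X$ is a unit for $\mu$.

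The central step is to verify the strict interchange law
$$\oplus(\alpha \cdot_L \beta,\, \gamma \cdot_L \delta) \;=\; \oplus(\alpha, \gamma) \cdot_L \oplus(\beta, \delta),$$
which holds on the nose by a direct comparison: both sides equal the path $t \mapsto \mu(\alpha(2t),\gamma(2t))$ for $t \leq 1/2$ and $t \mapsto \mu(\beta(2t-1),\delta(2t-1))$ for $t \geq 1/2$. Specializing $\beta = \gamma = e$ and combining with the preferred homotopies $\alpha \cdot_L e \simeq \alpha$, $e \cdot_L \delta \simeq \delta$, and the analogous unit homotopies for $\oplus$, the Eckmann-Hilton argument yields a preferred homotopy $m_L \simeq \oplus$ between maps $\Omega X \times \Omega X \to \Omega X$.

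Next, the representative $\Omega f : \Omega X \to \Omega K(R,k) = K(R, k-1)$ of $\tau(b)$ is by construction a loop map, and hence intertwines loop concatenations: $\Omega f \circ m_L \simeq m_L^K \circ (\Omega f \times \Omega f)$, where $m_L^K$ denotes loop concatenation on $K(R,k-1)$. Since $K(R,k-1)$ is equivalent to a topological abelian group, $m_L^K$ is in turn homotopic to the abelian group multiplication, which represents addition in cohomology. Combining this with $m_L \simeq \oplus$ gives
$$\oplus^* \tau(b) = [\Omega f \circ \oplus] = [\Omega f \circ m_L] = p_1^* \tau(b) + p_2^* \tau(b) = \tau(b) \otimes 1 + 1 \otimes \tau(b),$$
where $p_1, p_2 : \Omega X \times \Omega X \to \Omega X$ are the projections. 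The only delicate point will be the bookkeeping of preferred homotopies in the Eckmann-Hilton step; everything else reduces to a pointwise computation or a standard fact about Eilenberg-MacLane spaces, so I expect the rest to be essentially routine.
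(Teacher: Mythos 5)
Your proposal is correct and follows essentially the same route as the paper: reduce $\oplus$ to loop concatenation via the Eckmann--Hilton argument, then use the primitivity of the cohomology suspension under loop concatenation. The only difference is that the paper cites a textbook reference for the concatenation primitivity fact, whereas you spell out the standard argument via $\Omega f$ being a loop map and $K(R,k-1)$ being an abelian group object.
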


\begin{proof} By the Eckmann-Hilton argument we know that the H-space structure on $\Omega X$ induced by $\oplus$ is equivalent to the H-space structure given by concatenation of loops $c:\Omega X\times \Omega X\to \Omega X.$ It is well known (see for example Proposition 16.19 in \cite{zbMATH01716466}) that this satisfies
	$$c^*\tau(b)=\tau(b)\otimes 1 +1\otimes \tau(b).$$
\end{proof}

\bibliography{mybib}{}
\bibliographystyle{plain}

\end{document}